\documentclass{amsart}

\usepackage{amssymb}
\usepackage{amsmath,amsfonts,amsthm,amstext}
\usepackage{amsmath}
\usepackage{mathrsfs}  
\usepackage[all]{xy}
\usepackage{xcolor}

\newtheorem{theorem}{Theorem}[section]

\newtheorem{lemma}[theorem]{Lemma}

\newcommand{\onto}{\twoheadrightarrow}

\newcommand{\real}{\mathbb{R}}

\newcommand{\lgoth}{\mathcal{L}}
\newcommand{\fgoth}{\mathcal{F}}

\newcommand{\lf}{\lgoth_{\fgoth}}
\newcommand{\tq}{\,|\,}
\numberwithin{equation}{section}

\def\mapnew#1{\smash{\mathop{\longrightarrow}\limits^{#1}}}

\numberwithin{equation}{section}
\def\-{\overline}

\newcommand{\z}{\mathbb{Z}}

\DeclareMathOperator{\Ker}{Ker}

\theoremstyle{definition}

\begin{document}
\title[On the $\Sigma^1$-invariant of even Artin groups ]{
On the Bieri-Neumann-Strebel-Renz $\Sigma^1$-invariant of even Artin groups} 
\author{Dessislava H. Kochloukova}
\address{State University of Campinas (UNICAMP), SP, Brazil \\
\newline
email : desi@unicamp.br
\\} 
\email{}
\date{}
\keywords{}

\begin{abstract}  We calculate the Bieri-Neumann-Strebel-Renz invariant $\Sigma^1(G)$ for even Artin groups $G$ with underlying graph $\Gamma$ such that  if there is a closed reduced path in $\Gamma$ with all labels bigger than 2 then the length of such path is always odd. We show that $\Sigma^1(G)^c$ is a rationally defined spherical polyhedron.
\end{abstract}

\keywords{Artin groups, $\Sigma$-invariants}
\subjclass[2010]{
20J05}

\thanks{The author was   partially supported by CNPq grant 301779/2017-1  and by FAPESP grant 2018/23690-6.}

\maketitle

\section{Introduction}

By definition the $\Sigma$-invariants $\Sigma^m(G)$  and $\Sigma^m(G, \mathbb{Z})$ are subsets of the character sphere $S(G)$, where $S(G)$ is the set of equivalence classes of $  Hom(G, \mathbb{R}) \setminus \{ 0 \}$, where two real characters are equivalent if one is obtained from the other by multiplication with a positive real number. $S(G)$ can be identified with the unit sphere $\mathbb{S}^{ n-1}$ in $\mathbb{R}^n$, where $n$ is the torsion-free rank of $G/ G'$.  Whenever the invariants are defined we have the inclusions
$$S(G) = \Sigma^0(G) \supseteq \Sigma^1(G)   \supseteq \ldots \supseteq \Sigma^m(G) \supseteq \Sigma^{ m+1}(G) \supseteq \ldots$$
and
$$S(G) = \Sigma^0(G, \mathbb{Z})  \supseteq \Sigma^1(G, \mathbb{Z})  \supseteq \ldots \supseteq \Sigma^m(G, \mathbb{Z}) \supseteq \Sigma^{ m+1}(G, \mathbb{Z}) \supseteq \ldots,$$
where $\Sigma^m(G)$ is defined only for groups $G$ of homotopical type $F_m$ and $\Sigma^m (G, \mathbb{Z})$ is defined for any finitely generated group $G$ but if $G$ is not of homological type $FP_m$ then $\Sigma^m(G, \mathbb{Z}) = \emptyset$.
The homotopical finiteness property $F_m$ was first defined by Wall in \cite{W} and its homological version $FP_m$ by Bieri in \cite{B-book}. 
 By definition a group $G$ is of type $FP_m$ if the trivial $\mathbb{Z} G$-module $\mathbb{Z}$ has a projective resolution with finitely generated modules up to dimension $m$. The homotopical version of the property $FP_m$  is called $F_m$ and by definition a group $G$ is of homotopical type $F_m$ if there is a $K(G,1)$-complex with finite $m$-skeleton. Alternatively for $m \geq 2$ a group is of type $F_m$ if it is $FP_m$ and finitely presented. In \cite{BsBd}  Bestvina and Brady constructed the first examples of groups that are $FP_2$ but are not $F_2$ (i.e. finitely presented).  Furthermore these examples include groups that are of homological type $FP_{\infty}$, i.e. are $FP_m$ for every $m$,  but are not finitely presented.

 The first $\Sigma$-invariant  was defined  by 
Bieri and Strebel in \cite{BiSt}  for the class of finitely generated metabelian groups
and later the definiton was extended to homological and homotopical versions  by  Bieri, Neumann, Strebel \cite{BiNeSt}, Bieri and Renz \cite{BiRe}, Renz  \cite{Re}. In the case of a 3-manifold group the $\Sigma$-invariant defined in \cite{BiNeSt} is related to the Thurston semi-norm from \cite{Th}. Bieri and Renz showed in \cite{BiRe} that the homological $\Sigma$-invariants control the homological finiteness properties of the subgroups of $G$ that contain the commutator. The same holds in homotopical setting.

The $\Sigma$-invariants have been calculated for some classes of groups  and sometimes in low dimensions only : Thompson group $F$ \cite{BGK}, generalised Thompson groups \cite{K}, \cite{Z}, braided Thompson groups \cite{Z2}, Houghton groups \cite{Z}, some free-by-cyclic groups \cite{F-K},\cite{Ki}, fundamental groups of compact K\"ahler manifolds \cite{D}, metabelian groups of finite Pr\"ufer  rank \cite{Mein}, some Artin groups including right angled Artin groups \cite{A-K1}, \cite{A-L}, \cite{Me}, \cite{MeMnWy}, \cite{MeMnWy2}, limit groups \cite{K2}, pure symmetric automorphism groups of
finitely generated free groups \cite{O}, \cite{Z4}, some wreath products \cite{Men}, some residually free groups \cite{K-L}, the Lodha-Moore groups \cite{L-Z}. In this paper we add to the list above a subclass of the class of even Artin groups.

Let $\Gamma$ be a finite simplicial graph  with edges labeled by integers greater than one. The Artin group $G_{\Gamma}$ associated to the graph $\Gamma$ has generators the vertices $V(\Gamma)$  of $\Gamma$ and relations given by
    $$\underbrace{uvu\cdots}_{n \text{ factors}}=\underbrace{vuv\cdots}_{n \text{ factors}} \hbox{ for every edge } \hbox{ of }\Gamma \hbox{ with vertices } u,v \hbox{ and label }n \geq 2.$$
A subgraph $\Gamma_0$ of $\Gamma$ is \textbf{dominant} if, for each $v\in V(\Gamma) \setminus V(\Gamma_0)$, there is an edge $e\in E(\Gamma)$ that links $v$ with a vertex from $V(\Gamma_0)$.  

Let $\chi : G_{\Gamma} \to \real $ be a non-zero character of an Artin group $G_{\Gamma}$. An edge $e \in E(\Gamma)$ is called \textbf{dead} if $\chi(\sigma (e))=-\chi(\tau (e))$ and $e$ has an even label greater than 2.
 Let $\mathcal{L}_{\mathcal{F}}=\lf(\chi)$ be the complete subgraph of $\Gamma$ generated by the vertices $v\in V(\Gamma)$ such that $\chi(v)\neq 0$. Removing from  $\mathcal{L}_{\mathcal{F}}$ all dead edges we obtain a new graph denoted by $\mathcal{L}=\mathcal{L}(\chi)\subset \mathcal{L}_{\mathcal{F}}$ and called the \textbf{living subgraph}. Inspired by the Meier results in \cite{Me}  Almeida  and Kochloukova suggested in \cite{A-K1} the following conjecture.
 
 \medskip
 {\bf The $\Sigma^1$-Conjecture for Artin groups}
 {\it Let $G = G_{\Gamma}$ be an Artin group. Then
  $$\Sigma^1(G)=\{[\chi]\in S(G)\tq \mathcal{L}(\chi) \text{ is  a connected dominant subgraph  of $\Gamma$} \}.$$}
  
The $\Sigma^1$-Conjecture for Artin groups is known to hold when $\Gamma$ is a tree \cite{Me}, $\pi_1(\Gamma) \simeq \mathbb{Z}$ \cite{A-K1}, $\pi_1(\Gamma) $ is a free group of rank 2 \cite{A}, for Artin groups of finite type (i.e. the associated Coheter group is finite) \cite{A-L}. We concentrate now on a special type of Artin groups : the even Artin groups i.e. all the labels of the graph $\Gamma$ are even numbers. Though there is an extensive literature on Artin groups only recently even Artin groups were considered with more details.  In  \cite{G2} Blasco-Garcia,  Martinez-Perez and Paris showed that the even Artin groups of FC type are poly-free and residually finite. The $\Sigma$-invariants of even Artin groups of FC type are studied in \cite{C}. 
 Our main result describes $\Sigma^1(G)$ for specific even Artin groups $G$ that are not required to be of FC type.

\medskip
{\bf Theorem A}
  {\it Let $G = G_{\Gamma}$ be an even Artin group such that if there is a closed reduced path in $\Gamma$ with all labels bigger than 2 then the length of such path is always odd. Then
  $$\Sigma^1(G)=\{[\chi]\in S(G)\tq \mathcal{L}(\chi) \text{ is a connected dominant subgraph  of $\Gamma$} \}.$$}

Our approach to prove Theorem A is homological in the sense that we obtain information about $G'/ G''$ by calculating it as a certain homology group. In the calculation part we use Fox derivatives in order to construct the beginning of a free resolution of the trivial $\mathbb{Z} G$-module $\mathbb{Z}$.

In \cite{B-Gr} using valuation theory Bieri and Groves  showed  that for a finitely generated metabelian group $G$ the complement $\Sigma^1(G)^c = S(G) \setminus \Sigma^1(G)$ is a rationally defined spherical polyhedron i.e. a finite union of finite intersections of closed rationally defined semi-spheres, where rationality means that each semisphere is defined by a vector in $\mathbb{R}^n$ with rational coordinates. 
In \cite{Ki} Kielak considered a non-normalised version of $\Sigma^1(G)$, where $\Sigma^1(G)$ is a subset of $Hom(G, \mathbb{R}) \setminus \{ 0 \}$ and he showed that for one of the classes of groups :
descending HNN extensions of finitely generated free groups; 
fundamental groups of compact connected oriented 3-manifolds; 
agrarian Poincaré duality groups of dimension 3 and type F; 
agrarian groups of deficiency one $\Sigma^1(G)$ is determined by an integral polytope \cite{Ki}. Using the normalised definition of $\Sigma$ in many cases $\Sigma^1(G)^c$ is a rationally defined  spherical polyhedron but this is not true in general. In \cite{BiNeSt} there is an example of a group of PL automorphisms of an interval where $\Sigma^1(G)^c$ contains a non-discrete isolated point, hence it is not a rationally defined spherical polyhedron.

For a subgroup $H$ of $G$ we set
$$S(G,H):=\{[\chi]\in S(G)~ \tq ~ \chi(H)=0\}.$$
Note that $S(G, H)$ is a rationally defined spherical polyhedron.

\medskip
{\bf Proposition B} {\it Let $G = G_{\Gamma}$ be an Artin group for which the $\Sigma^1$-Conjecture for Artin groups holds. Then 
there exist subgroups $H_1 \ldots, H_s$ of $G$ such that
$$\Sigma^1(G)^c = \cup_{1 \leq i \leq s} S(G, H_i),$$
in particular
$\Sigma^1(G)^c$ is a rationally defined spherical  polyhedron. }

\medskip

As a corollary of the proof of Theorem A we obtain the following result.

\medskip
{\bf Corollary C} {\it Let $G = G_{\Gamma}$ be an even Artin group such that if there is a closed reduced path in $\Gamma$ with all labels bigger than 2 then the length of such path is always odd. Then for  $[\chi] \in S(G)$ we have  $[\chi] \in \Sigma^1(G)$ if and only if $ [\overline{\chi}] \in \Sigma^1(G/ G '')$, where $\overline{\chi}$ is the real character of $G/G''$ induced by $\chi$.}

\medskip
In Section \ref{final} we explain why the approach we use in the proof of Theorem A  does not work for more general Artin groups. The point is that for more general Artin groups (including some even Artin groups) Corollary C does not necessary hold. We provided concrete examples (one is an even Artin group and the other is not) where the expected behaviour of $\Sigma^1(G)$, as suggested by the $\Sigma^1$-Conjecture for Artin groups, differs from the structure of $\Sigma^1(G/ G'')$.

\section{Preliminary results} \label{prelim}

\subsection{On the $\Sigma$-invariants} 

 Let $G$ be a finitely generated group.
By definition $S(G)$ is the set of equivalence classes $[\chi]$ of non-zero real characters $$\chi : G \to \mathbb{R}$$ with respect to the equivalence relation $\sim$, given by: $\chi_1 \sim \chi_2$ if and only if there is a positive real number $r$ such that $\chi_1 = r \chi_2$. Thus $[\chi] = R_{>0} \chi$. For a fixed character $\chi: G \to \mathbb{R}$ consider the monoid  $$G_{\chi} = \{ g \in G  ~| ~ \chi(g) \geq 0 \}.$$ Bieri and Renz defined in \cite{BiRe} the homological $\Sigma$-invariants as
$$
\Sigma^m(G, A) = \{ [\chi] \in S(G) \tq A \hbox{ has type } FP_m \hbox{ as } \z[G_{\chi}]\hbox{-module} \},
$$ 
where $A$ is a finitely generated $\z G$-module. 
In this paper all considered modules are left ones. 

There is a homotopical version of $\Sigma^m(G, \mathbb{Z})$ where $\z $ is the trivial $\z G$-module, denoted by  $\Sigma^m(G)$. By definition $[\chi] \in \Sigma^1(G)$ if and only if for a Cayley graph $\Gamma$ of $G$, that corresponds to a finite generating set of $G$, the subgraph $\Gamma_{\chi}$ spanned by all vertices in $G_{\chi}$ is connected. In some papers the invariants are defined by left actions and in others by right actions, thus they might differ by sign but if all actions come from the same side  the known definitions of $\Sigma$-invariants agree.
By \cite{Re} for any group $G$ we have  $\Sigma^1(G) = \Sigma^1(G, \z)$   and for $m \geq 2$, $\Sigma^m(G) = \Sigma^m(G, \mathbb{Z}) \cap \Sigma^2(G)$. Furthermore by \cite{MeMnWy2} the inclusion $\Sigma^m(G) \subseteq \Sigma^m(G, \mathbb{Z})$ could be strict.

\begin{theorem} \cite{BiRe} \label{teo sigma prop fin}
    Let $G$ be a group of type $FP_m$  and $H$ be a subgroup of $G$ containing  the commutator. Then $H$ is of type $FP_m$ if and only if
    $$S(G,H) =\{[\chi]\in S(G)~ \tq ~ \chi(H)=0\}\subseteq \Sigma ^m(G,\mathbb{Z}).$$
\end{theorem}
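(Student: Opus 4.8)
The plan is to treat this as the special case $A=\mathbb{Z}$, $N=H$ of the Bieri--Renz criterion expressing the homological finiteness properties of a normal subgroup with abelian quotient in terms of $\Sigma$-invariants, and to reconstruct its proof. I would begin with the reductions forced by the hypothesis $H\supseteq G'$: then $Q:=G/H$ is a finitely generated abelian group, and a nonzero real character of $G$ vanishes on $H$ exactly when it factors through $Q$, so $S(G,H)$ is the image in $S(G)$ of the whole character sphere $S(Q)$, a subsphere of dimension $\mathrm{rk}(Q)-1$. Since $FP_m$ is preserved under passing to subgroups of finite index in either direction, and since real characters annihilate torsion, I would pass to the preimage in $G$ of the torsion subgroup of $Q$ and assume that $Q\cong\mathbb{Z}^k$ is free abelian, so that $S(G,H)\cong\mathbb{S}^{k-1}$ is compact. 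The task then reads: given that $\mathbb{Z}$ is of type $FP_m$ over $\mathbb{Z}G$, show that $\mathbb{Z}$ is of type $FP_m$ over $\mathbb{Z}H$ if and only if $\mathbb{S}^{k-1}=S(G,H)\subseteq\Sigma^m(G,\mathbb{Z})$.

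For the forward implication I would assume $H$ is $FP_m$ and fix $[\chi]\in S(G,H)$ with induced character $\bar\chi\colon Q\to\mathbb{R}$. Since $\chi(H)=0$, the whole coset $gH$ lies in $G_\chi$ whenever $g$ does, so $G_\chi=\bigsqcup_{p\in P}g_pH$ where $P:=\{p\in\mathbb{Z}^k : \bar\chi(p)\ge 0\}$; thus $\mathbb{Z}G_\chi$ is the crossed product of $\mathbb{Z}H$ with the monoid $P$. Because $\mathbb{Z}$ is of type $FP_\infty$ over $\mathbb{Z}P$ for the positive monoid $P$ of any character on $\mathbb{Z}^k$ (rational or not), and of type $FP_m$ over $\mathbb{Z}H$, a double-complex argument splices a finitely generated $\mathbb{Z}H$-resolution with a finitely generated $\mathbb{Z}P$-resolution into a $\mathbb{Z}G_\chi$-resolution of $\mathbb{Z}$ that is finitely generated through degree $m$. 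Hence $[\chi]\in\Sigma^m(G,\mathbb{Z})$.

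The converse is the substantive direction. Assuming $\mathbb{S}^{k-1}\subseteq\Sigma^m(G,\mathbb{Z})$, I would use that $\Sigma^m(G,\mathbb{Z})$ is open in $S(G)$ and that $\mathbb{S}^{k-1}$ is compact to select finitely many characters $[\chi_1],\dots,[\chi_r]$ with $\mathbb{Z}$ of type $FP_m$ over each $\mathbb{Z}G_{\chi_i}$ and with the open half-spaces $\{x\in Q\otimes\mathbb{R} : \bar\chi_i(x)>0\}$ covering $(Q\otimes\mathbb{R})\setminus\{0\}$. Restricting a $\mathbb{Z}G$-resolution of $\mathbb{Z}$ that is finitely generated through degree $m$ down to $\mathbb{Z}H$, I would then patch the finitely generated local data over the rings $\mathbb{Z}G_{\chi_i}$ by a controlled Mayer--Vietoris argument indexed by the intersection poset of the half-spaces: the covering condition guarantees that every direction to infinity in $Q$ is absorbed by some $G_{\chi_i}$, while the jointly bounded part is finite over $\mathbb{Z}H$, so that $\mathbb{Z}$ is finitely dominated through degree $m$, hence of type $FP_m$, over $\mathbb{Z}H$.

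I expect this last patching to be the main obstacle. The reductions are formal, the forward direction is routine once the crossed-product description is in place, and openness of $\Sigma^m(G,\mathbb{Z})$ may be quoted; the real work is to convert the pointwise finiteness over the monoid rings $\mathbb{Z}G_{\chi_i}$ into a single global finite-generation statement over $\mathbb{Z}H$ without losing control in degrees $\le m$. In the valuation-on-resolutions formulation of Bieri and Renz this obstacle appears as the task of assembling the local chain contractions---one for each $\chi_i$, contracting the part of the resolution on which $v_{\chi_i}$ is positive---into a uniform finiteness statement, using that finitely many such valuations are simultaneously bounded below on each element of $\mathbb{Z}H$.
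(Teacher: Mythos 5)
The paper does not prove this statement at all: it is quoted from Bieri and Renz \cite{BiRe} (it is Theorem B of that paper), so your reconstruction can only be measured against the original argument, not against anything in the text. Your reductions (normality of $H$, passage to the torsion-free quotient $Q\cong\mathbb{Z}^k$, identification of $S(G,H)$ with a compact subsphere) are correct, and your forward direction follows the standard route: $\mathbb{Z}G_\chi$ is free as a $\mathbb{Z}H$-module with $\mathbb{Z}G_\chi\otimes_{\mathbb{Z}H}\mathbb{Z}\cong\mathbb{Z}P$, so a double-complex argument splices the two resolutions. Note, however, that the ingredient you quote without proof, namely that $\mathbb{Z}$ is of type $FP_\infty$ over $\mathbb{Z}P$ for the positive monoid $P$ of an arbitrary (possibly irrational) character of $\mathbb{Z}^k$, is exactly the case ``$G$ abelian, $H=1$'' of the theorem you are reconstructing. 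It is true and its use is not circular, but it needs an argument; the short one already invokes the Bieri--Renz valuation criterion (take the Koszul resolution of $\mathbb{Z}$ over $\mathbb{Z}Q$ and multiply by an element $t$ with $\bar\chi(t)>0$: this is a chain map lifting $\mathrm{id}_{\mathbb{Z}}$ that raises the valuation $v_{\bar\chi}$ uniformly), i.e.\ the very machinery your sketch never sets up.

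The genuine gap is the converse, which is the substantive half, and the mechanism you propose there would not work as stated. There is no ``Mayer--Vietoris argument indexed by the intersection poset of the half-spaces'': the local data are finitely generated resolutions over the distinct subrings $\mathbb{Z}G_{\chi_i}\subseteq\mathbb{Z}G$, these do not live in a common module category, and no exact sequence glues them into a resolution over $\mathbb{Z}H=\bigcap_i\mathbb{Z}G_{\chi_i}$; likewise ``finitely dominated through degree $m$'' is the conclusion to be established, not something the covering condition produces by itself. (Compactness and openness are also not where the work lies: since the whole subsphere $S(G,H)$ is assumed to lie in $\Sigma^m(G,\mathbb{Z})$, finitely many rational directions with covering cones can be chosen outright.) What Bieri and Renz actually do is work with valuations on one fixed resolution: let $F$ be a free resolution of $\mathbb{Z}$ over $\mathbb{Z}G$, finitely generated in degrees $\le m$; each hypothesis $[\chi_i]\in\Sigma^m(G,\mathbb{Z})$ yields, by their criterion, a chain endomorphism $\varphi_i$ of $F$ lifting $\mathrm{id}_{\mathbb{Z}}$ (hence homotopic to the identity) with $v_{\chi_i}\circ\varphi_i>v_{\chi_i}$ in degrees $\le m$; composing the $\varphi_i$ and using that a region of $Q\cong\mathbb{Z}^k$ on which all the $\bar\chi_i$ are bounded is finite, one exhibits a finitely generated free $\mathbb{Z}H$-subcomplex of the restriction of $F$ that dominates $\mathbb{Z}$ in degrees $\le m$, whence $H$ is of type $FP_m$. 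Your closing paragraph correctly names exactly this assembly as ``the main obstacle,'' but it is not carried out, so the proposal is missing the core of the proof rather than supplying it by a different route.
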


\begin{theorem} \cite{BiRe} \label{teosigmaopen}
 Let $G$ be a finitely generated group and $A$ a finitely generated $\z G$-module. Then $\Sigma^m(G,A)$ is open in $S(G)$. If $G$ is of type $F_m$ then $\Sigma^m(G)$ is open in $S(G)$.
\end{theorem}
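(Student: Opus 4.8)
The plan is to reduce the openness of $\Sigma^m$ to a finite system of \emph{strict} linear inequalities on the character, using a height function on a model of finite type; openness then falls out because strict inequalities are stable under small perturbations and there are only finitely many of them. First I would fix a model. For the homotopical invariant, since $G$ is of type $F_m$, choose an $(m-1)$-connected free $G$-CW-complex $Y$ whose $m$-skeleton has only finitely many $G$-orbits of cells. For a representative character $\chi$ I would build a $G$-equivariant height function $h\colon Y\to\mathbb{R}$ that is affine on cells and satisfies $h(gy)=\chi(g)+h(y)$, and then filter $Y$ by its superlevel sets $Y_{\ge t}=h^{-1}([t,\infty))$. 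When $m=1$ this is exactly the Cayley-graph picture of the excerpt: $Y^{(1)}$ is the Cayley graph and $Y_{\ge 0}$ is the subgraph $\Gamma_\chi$ spanned by $G_\chi$.

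Next I would invoke the Bieri--Renz characterization: $[\chi]\in\Sigma^m(G)$ if and only if the filtration $(Y_{\ge t})_t$ is \emph{essentially $(m-1)$-connected}, that is, for every $t$ there is some $s\le t$ such that every continuous map of a $k$-sphere into $Y_{\ge t}$ (for $k\le m-1$) contracts inside $Y_{\ge s}$. The hard part, and the step I expect to be the main obstacle, is upgrading this ``for every $t$ there exists $s$'' to a \emph{uniform} statement: that a single gap $\lambda\ge 0$ works with $s=t-\lambda$ for all $t$ simultaneously. I would obtain this from cocompactness together with a compactness (K\"onig's lemma) argument: since the relevant skeleton has finitely many $G$-orbits of cells, one translates each offending cycle back into a fixed fundamental region and sees that, were the filling depth unbounded, a limiting argument would produce a cycle with no filling at all, contradicting essential connectivity at $\chi$.

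Once the uniform bound $\lambda$ is secured, I would extract a finite certificate. For each of the finitely many $G$-orbit representatives $\tau$ of cells in dimension $\le m$, fix one filling of the associated cycle inside $Y_{\ge h(\tau)-\lambda}$; this involves finitely many cells $\sigma$, each satisfying $h(\sigma)>h(\tau)-\lambda$ with a positive slack. Because the inequality $h(g\sigma)\ge h(g\tau)-\lambda$ is equivalent to $h(\sigma)\ge h(\tau)-\lambda$ for every $g$, the equivariant translates of these fillings are controlled by the \emph{same} finite list of inequalities. Each such inequality is a strict, scale-invariant, open condition on $\chi\in\mathrm{Hom}(G,\mathbb{R})\cong\mathbb{R}^n$, so it persists for every $\chi'$ in a neighborhood; for such $\chi'$ the translated fillings still lie in the correct superlevel sets, the uniform essential $(m-1)$-connectivity survives, and hence $[\chi']\in\Sigma^m(G)$. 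Scale invariance lets the neighborhood descend to $S(G)$, giving openness. Finally, the homological assertion for $\Sigma^m(G,A)$ runs along identical lines, with the chain complex of $Y$ replaced by a free resolution $F_\bullet\to A$ over $\mathbb{Z}G$ finitely generated through degree $m$, ``essentially $(m-1)$-connected'' replaced by ``essentially $(m-1)$-acyclic,'' and spheres replaced by cycles; the perturbation argument is unchanged.
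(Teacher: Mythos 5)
The paper offers no proof of this statement: it is quoted verbatim from Bieri--Renz \cite{BiRe} (with the homotopical case due to Renz \cite{Re}), so your proposal has to be measured against that argument. Your high-level architecture --- a membership criterion, a uniformity statement, a finite $G$-equivariant certificate, and stability of the certificate under perturbation of $\chi$ --- is indeed the architecture of the real proof. But the decisive step, namely producing a finite certificate that actually \emph{implies} membership in $\Sigma^m$ for all nearby characters, is missing, and the certificate you describe cannot do this job. Take $m=1$ and read your prescription literally: for a $1$-cell $\tau$ of the Cayley graph with endpoints $g$ and $gx$, a ``filling of the associated cycle inside $Y_{\ge h(\tau)-\lambda}$'' is a path from $g$ to $gx$ whose vertices stay above $\min\{\chi(g),\chi(gx)\}-\lambda$; the edge $\tau$ itself is such a path, so this certificate is satisfied by \emph{every} character and certifies nothing. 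The underlying problem is that bounded-depth local fillings do not concatenate into bounded-depth global fillings: if $g\in G_{\chi'}$ is written as $x_{i_1}\cdots x_{i_k}$, the path through the prefixes may dip arbitrarily deep as $k$ grows, and replacing each edge by a translated certificate path only controls depth \emph{relative to that edge's endpoints}, so the assembled path is no better than the naive one. This is exactly why openness is a theorem rather than an observation, and no argument in your sketch addresses it.

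What the Bieri--Renz proof supplies at this point is a certificate with a \emph{strict self-improvement} property that can be iterated. They work with the valuation $v_\chi$ on a free resolution $F_\ast\to A$ over $\mathbb{Z}G$, finitely generated in degrees $\le m$, and prove that $[\chi]\in\Sigma^m(G,A)$ if and only if there is a chain endomorphism $\varphi$ of $F_\ast$ lifting $\mathrm{id}_A$ with $v_\chi(\varphi(e))>v_\chi(e)$ for each member $e$ of a finite basis in degrees $\le m$. Because $\varphi$ lifts the identity, its iterates $\varphi^n$ push every cycle arbitrarily high while remaining homologous to it, and this iteration is what converts finitely many strict inequalities --- which involve only the finitely many group elements occurring in the matrices of $\varphi$, and hence persist for all $\chi'$ near $\chi$ --- into the global condition that $A$ is $FP_m$ over $\mathbb{Z}G_{\chi'}$; Renz's thesis runs the same scheme homotopically with a shift map homotopic to the identity to handle $\Sigma^m(G)$. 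Constructing $\varphi$ from the hypothesis $[\chi]\in\Sigma^m(G,A)$ is the real content of the theorem, and nothing in your proposal plays this role. A lesser remark: your K\"onig's-lemma justification of uniformity is also shaky, since after translating offending spheres into a fundamental region their supports are still unbounded, so there is no finitely-branching tree to apply the lemma to; the uniformity claim itself is true, but it follows more directly from equivariance, since $g\cdot Y_{\ge t}=Y_{\ge t+\chi(g)}$ transports a filling constant at one level to all levels.
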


For a subset $T \subseteq S(G)$ we write $T^c$ for the complement $S(G) \setminus T$ of $T$ in $S(G)$.
The following result is a well-known  fact that follows from the definition of $\Sigma^1$.

\begin{lemma}\label{propsigmaepi}
Let $\pi:G\onto H$ be a group epimorphism and ${\mu}$ be a non-trivial character of $H$. Then
  $\left[{\mu}\right]\in\Sigma ^1\left(H\right)^c$ implies $ \left[{\mu}\circ\pi\right]\in\Sigma ^1(G)^c.$
\end{lemma}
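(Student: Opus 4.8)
The plan is to prove Lemma \ref{propsigmaepi} directly from the combinatorial definition of $\Sigma^1$ via the Cayley graph criterion stated just above the lemma, namely that $[\chi] \in \Sigma^1(G)$ iff the subgraph $\Gamma_\chi$ spanned by the vertices lying in the monoid $G_\chi = \{g \in G \tq \chi(g) \geq 0\}$ is connected, for a Cayley graph $\Gamma$ coming from a finite generating set. The statement to prove is the contrapositive-friendly implication: if $\mu \circ \pi$ lies in $\Sigma^1(G)$, then $\mu$ lies in $\Sigma^1(H)$. So I would assume $[\mu \circ \pi] \in \Sigma^1(G)$ and show $[\mu] \in \Sigma^1(H)$, which is logically equivalent to the stated implication.

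First I would fix a finite generating set $X$ of $G$; since $\pi$ is surjective, $\pi(X)$ is a finite generating set of $H$, and I would build the Cayley graphs of $G$ and $H$ accordingly so that $\pi$ induces a graph morphism $\bar\pi$ from $\mathrm{Cay}(G,X)$ to $\mathrm{Cay}(H,\pi(X))$ sending a vertex $g$ to $\pi(g)$ and an edge labelled $x$ to the edge labelled $\pi(x)$. The key compatibility is that $\chi := \mu \circ \pi$ satisfies $\chi(g) = \mu(\pi(g))$, so a vertex $g$ lies in $G_\chi$ exactly when $\pi(g)$ lies in $H_\mu$; hence $\bar\pi$ maps the subgraph $(\mathrm{Cay}\,G)_\chi$ into the subgraph $(\mathrm{Cay}\,H)_\mu$, and moreover it maps \emph{onto} it because every vertex $h \in H_\mu$ has a preimage $g$ with $\chi(g) = \mu(h) \geq 0$.

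Next I would take two vertices $h_0, h_1 \in H_\mu$ and lift them to vertices $g_0, g_1 \in G_\chi$. Connectedness of $(\mathrm{Cay}\,G)_\chi$, which holds by the assumption $[\chi] \in \Sigma^1(G)$, gives an edge-path in $G_\chi$ joining $g_0$ to $g_1$; applying $\bar\pi$ sends this to an edge-path in $H_\mu$ joining $h_0$ to $h_1$, since $\bar\pi$ carries $G_\chi$ into $H_\mu$ and sends edges to edges. Because the endpoints $h_0, h_1 \in H_\mu$ were arbitrary, $(\mathrm{Cay}\,H)_\mu$ is connected, whence $[\mu] \in \Sigma^1(H)$. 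Taking contrapositives yields the stated lemma.

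I do not expect any serious obstacle here, as this is exactly the kind of ``well-known fact'' the text advertises. The only point requiring a little care is verifying that $\bar\pi$ genuinely restricts to a surjective graph morphism between the relevant positive monoid subgraphs, i.e. checking both that edges of $\Gamma_\chi$ map to edges of $\Gamma_\mu$ (immediate from $\chi(g) \geq 0 \Rightarrow \mu(\pi g) \geq 0$) and that one can lift vertices and endpoints; an image of a connected graph under a graph morphism is connected, so once surjectivity onto $(\mathrm{Cay}\,H)_\mu$ is established the conclusion is automatic. A minor subtlety is that $\mu$ must be non-trivial for $[\mu]$ to define a point of $S(H)$, which is guaranteed by the hypothesis that $\mu$ is a non-trivial character.
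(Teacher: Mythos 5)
Your proof is correct, and it matches the paper's intent: the paper states this lemma without proof, calling it a well-known fact that follows from the definition of $\Sigma^1$, and your argument—pushing a path in the subgraph $\Gamma_\chi$ forward through the graph morphism induced by $\pi$ on Cayley graphs for $X$ and $\pi(X)$, after lifting the endpoints—is exactly that definitional argument. The only cosmetic point worth noting is that an edge labelled by $x\in X$ with $\pi(x)=1$ maps to a degenerate loop rather than an edge, which does not affect the connectivity conclusion.
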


\subsection{$\Sigma$-invariants of Artin groups}

Consider the living subgraph $\mathcal{L}=\mathcal{L}(\chi)$ that was defined in the introduction.

\begin{theorem} \cite{MeMnWy} \label{teobase}
  Let $G = G_{\Gamma}$ be an Artin group  and let $\chi$ be a non-zero real character of $G$.

  \begin{enumerate}
    \item
    If $\mathcal{L}(\chi)$ is a connected dominant subgraph of $\Gamma$ then $[\chi]\in\Sigma^1(G)$.

    \item
    If $[\chi]\in \Sigma^1(G)$ then $\mathcal{L}_{\mathcal{F}}(\chi)$ is a connected dominant subgraph of $\Gamma$.
  \end{enumerate}
\end{theorem}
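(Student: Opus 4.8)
The plan is to work with the standard combinatorial description of $\Sigma^1$: fixing the generating set $X=V(\Gamma)$, one has $[\chi]\in\Sigma^1(G)$ precisely when the subgraph $\Gamma_\chi$ of the Cayley graph $\mathrm{Cay}(G,X)$ spanned by the vertices $g$ with $\chi(g)\ge 0$ is connected. Since $\Gamma_\chi$ is invariant under left multiplication by the monoid $G_\chi$ and its edges come from right multiplication by $X^{\pm 1}$, connectivity is governed by how the defining relators, read as loops based at $1$, behave with respect to the height function $g\mapsto\chi(g)$. For an edge $\{u,v\}$ with even label $2m$ the only relator is $(uv)^m=(vu)^m$, a $4m$-gon, and the heart of both directions is the bookkeeping of the partial heights along its two syllable-sides $uvuv\cdots$ and $vuvu\cdots$ as a function of the signs of $\chi(u)$, $\chi(v)$ and of the sum $\chi(u)+\chi(v)$.

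For the sufficiency direction (statement (1)) I would assume $\mathcal{L}(\chi)$ connected and dominant and build, for every $g\in G_\chi$, a path from $1$ to $g$ lying in $\Gamma_\chi$. First I would connect $1$ to each living generator (one with $\chi(v)\neq0$): travelling along a path of living edges inside the connected graph $\mathcal{L}(\chi)$, each living edge is crossed by pushing the path across its relator, which is possible because for a non-dead edge one of the two syllable-sides of $(uv)^m=(vu)^m$ stays at height $\ge 0$. Next I would attach each $\chi$-trivial generator to a living one using dominance and the corresponding relator, and finally translate and induct on word length to reach all of $G_\chi$. The delicate point, and the reason the hypothesis is phrased with $\mathcal{L}(\chi)$ rather than $\mathcal{L}_{\mathcal{F}}(\chi)$, is exactly the behaviour of dead edges: along a dead edge the height is pinned to $0$, and the crossing it provides cannot be chained to complete the connection of the whole positive part, so dead edges must be discarded from the graph used for routing.

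For the necessity direction (statement (2)) I would argue contrapositively and feed suitable quotients into Lemma \ref{propsigmaepi}. If $\mathcal{L}_{\mathcal{F}}(\chi)$ is disconnected, killing all $\chi$-trivial vertices — a well-defined retraction onto the full subgraph on the support (immediate when the labels are even, as in the case of interest, since each relator collapses consistently) — maps $G$ onto $G_{\mathcal{L}_{\mathcal{F}}}$, which is a nontrivial free product of the Artin groups of the connected components of $\mathcal{L}_{\mathcal{F}}(\chi)$; the induced character $\overline\chi$ is nonzero, and since $\Sigma^1$ of a nontrivial free product is empty, Lemma \ref{propsigmaepi} forces $[\chi]\in\Sigma^1(G)^c$, a contradiction. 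If instead $\mathcal{L}_{\mathcal{F}}(\chi)$ fails to be dominant, choose a vertex $v$ with $\chi(v)=0$ none of whose neighbours lies in the support; killing the neighbours of $v$ realizes $v$ as an isolated vertex, hence a free factor $\langle v\rangle\cong\mathbb{Z}$, again producing an epimorphism onto a nontrivial free product on which $\overline\chi\neq0$, and Lemma \ref{propsigmaepi} gives the same contradiction.

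The main obstacle is the sufficiency direction, specifically the precise analysis of the even relators of label $>2$ and the exact role of dead edges. Making the path-pushing rigorous requires controlling heights across each relator simultaneously — most cleanly through van Kampen diagrams or a discrete Morse argument on the presentation $2$-complex — and identifying exactly which edges admit a height-non-decreasing detour. I expect the genuine difficulty to be showing that discarding precisely the dead edges is what the argument can accommodate, which is also the source of the asymmetry between the hypothesis $\mathcal{L}(\chi)$ in (1) and the conclusion $\mathcal{L}_{\mathcal{F}}(\chi)$ in (2); closing this gap is exactly the phenomenon that the hypotheses of Theorem A are designed to control.
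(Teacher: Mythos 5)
You should first be aware that the paper contains no proof of Theorem \ref{teobase} at all: it is quoted verbatim from \cite{MeMnWy}. So your proposal must be judged on its own correctness against the Meier--Meinert--van Wyk argument, and on that score it has two genuine gaps, one in each direction.

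In the necessity direction, your dominance argument fails for Artin groups with odd labels, and the theorem is stated for \emph{all} Artin groups, not just even ones. Your hedge ``immediate when the labels are even'' conceals a real obstruction: if a killed vertex is joined to a kept vertex by an edge of odd label $n$, the relation $uvu\cdots=vuv\cdots$ collapses to $u^{(n-1)/2}=u^{(n+1)/2}$, i.e.\ it kills the kept vertex too. Concretely, let $\Gamma$ be the path on vertices $v,w,u$ with label $3$ on $\{v,w\}$ and label $2$ on $\{w,u\}$, and let $\chi(v)=\chi(w)=0$, $\chi(u)=1$. Then $\mathcal{L}_{\mathcal{F}}(\chi)=\{u\}$ is not dominant, but your recipe (kill the neighbour $w$ of $v$) forces $v=1$ as well, so the quotient is $\langle u\rangle\cong\mathbb{Z}$, whose $\Sigma^1$ is all of $S(\mathbb{Z})$, and no contradiction results. (Your \emph{disconnectedness} argument does survive in general, because an odd-labelled edge forces its endpoints to have equal $\chi$-values, so no odd edge can join a $\chi$-trivial vertex to a support vertex; but you did not make this observation, and in the dominance case the repair is not cosmetic: one needs instead the amalgam $G_\Gamma = G_{\Gamma_1} *_{G_{\Gamma_0}} G_{\Gamma_2}$, where $\Gamma_2$ is the full subgraph on $v$ and its neighbours, $\Gamma_0$ its link, and $\Gamma_1$ the full subgraph on $V(\Gamma)\setminus\{v\}$, together with injectivity of parabolic subgroups (van der Lek) and the BNS fact that $[\chi]\notin\Sigma^1$ whenever $\chi$ vanishes on the edge group of a nontrivial amalgam; this also handles my example, since there $G\cong B_3 *_{\langle w\rangle}\mathbb{Z}^2$ with $\chi(w)=0$.)

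In the sufficiency direction, which is where the whole content of the theorem lies, your sketch is not a proof, and its central mechanism is demonstrably not the right one. You propose to cross a living edge ``because for a non-dead edge one of the two syllable-sides of $(uv)^m=(vu)^m$ stays at height $\geq 0$''. But this property holds for \emph{dead} edges as well: if $\chi(u)=-\chi(v)>0$, the partial heights along $(uv)^m$ alternate between $\chi(u)$ and $0$ and never go negative. Hence any path-pushing scheme driven by this observation would cross dead edges too, and would ``prove'' that connectedness and dominance of $\mathcal{L}_{\mathcal{F}}(\chi)$ suffices --- which is false: for the dihedral Artin group of label $4$ with $\chi(u)=-\chi(v)$ one has $[\chi]\notin\Sigma^1$. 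The obstruction carried by a dead edge is global, invisible on relator boundaries: writing $t=uv$, the dihedral Artin group of label $2m>2$ is $\langle t\rangle *_{\langle t^m\rangle}(\langle t^m\rangle\times\langle u\rangle)$, and the restricted character lies in $\Sigma^1$ of this group exactly when $\chi(t)\neq 0$, i.e.\ when the edge is not dead. The actual proof must run through such dihedral computations and then globalize along the connected dominant living subgraph by a BNS-type generation criterion; you candidly flag this (``I expect the genuine difficulty to be\ldots''), but nothing in the proposal closes that gap, so statement (1) remains unproved.
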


 Using Theorem \ref{teobase} the $\Sigma^1$-Conjecture for Artin groups was reduced in \cite{A-K1}  to the case of discrete characters.

\begin{lemma}\label{corprob} \cite{A-K1}
  Let $G = G_{\Gamma}$ be an Artin group. Assume that for every discrete character $\chi$ of $G$ such that
  $\mathcal{L}_{\mathcal{F}}(\chi)$  is connected and $ \mathcal{L}(\chi) $ is  disconnected  then $ [\chi]\in\Sigma^1(G)^c. $
  Then
  $$\Sigma^1(G)=\{[\chi]\in S(G)\tq \mathcal{L}(\chi) \text{ is a connected dominant subgraph of $\Gamma$} \}.$$
\end{lemma}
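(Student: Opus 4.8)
The plan is to prove the two inclusions separately. Write $\Sigma_0$ for the right-hand side $\{[\chi]\in S(G)\tq \mathcal{L}(\chi)\text{ is a connected dominant subgraph of }\Gamma\}$. The inclusion $\Sigma_0\subseteq\Sigma^1(G)$ is immediate from Theorem \ref{teobase}(1) and needs no hypothesis. For the reverse inclusion I would take $[\chi]\in\Sigma^1(G)$ and show $\mathcal{L}(\chi)$ is connected dominant. By Theorem \ref{teobase}(2) the graph $\mathcal{L}_{\mathcal{F}}(\chi)$ is connected dominant; since $\mathcal{L}(\chi)$ and $\mathcal{L}_{\mathcal{F}}(\chi)$ have the same vertex set (passing to the living subgraph only deletes edges), and dominance depends only on the vertex set together with the edges of the ambient graph $\Gamma$, the graph $\mathcal{L}(\chi)$ is automatically dominant. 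Thus the only thing left to establish is that $\mathcal{L}(\chi)$ is connected, and it suffices to prove the contrapositive: if $\mathcal{L}_{\mathcal{F}}(\chi)$ is connected and $\mathcal{L}(\chi)$ is disconnected, then $[\chi]\in\Sigma^1(G)^c$. The hypothesis of the lemma gives exactly this for discrete characters, so the whole content of the argument is to remove the word ``discrete''.

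To do so I would approximate an arbitrary such $\chi$ by discrete characters carrying identical living data. Recording a character by its values on the vertices identifies $Hom(G,\mathbb{R})$ with the rational linear subspace $V_{\Gamma}\subseteq\mathbb{R}^{V(\Gamma)}$ cut out by $\chi(u)=\chi(v)$ for every odd-labelled edge. Set $A=\{v\in V(\Gamma)\tq \chi(v)\neq 0\}$ and let $D$ be the set of edges $e$ with even label greater than $2$, both endpoints in $A$, and $\chi(\sigma(e))+\chi(\tau(e))=0$; these are precisely the dead edges, so $\mathcal{L}_{\mathcal{F}}(\chi)$ is the full subgraph on $A$ and $\mathcal{L}(\chi)$ is that subgraph with the edges of $D$ deleted. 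Let $W\subseteq V_{\Gamma}$ be the subspace additionally satisfying $\chi(v)=0$ for $v\notin A$ and $\chi(\sigma(e))+\chi(\tau(e))=0$ for $e\in D$. As $W$ is an intersection of rational hyperplanes, its rational points are dense in it, and $\chi\in W$. The complementary conditions singling out $A$ and $D$, namely $\chi(v)\neq 0$ for $v\in A$ and $\chi(\sigma(e))+\chi(\tau(e))\neq 0$ for the even-labelled ($>2$) edges on $A$ not in $D$, are strict inequalities, hence hold on an open neighbourhood of $\chi$.

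I would then choose rational characters $\chi_k\in W$ with $\chi_k\to\chi$; for $k$ large these lie in the open set defined by the above inequalities, so they share the alive-vertex set $A$ and the dead-edge set $D$ with $\chi$. Consequently $\mathcal{L}_{\mathcal{F}}(\chi_k)=\mathcal{L}_{\mathcal{F}}(\chi)$ is connected and $\mathcal{L}(\chi_k)=\mathcal{L}(\chi)$ is disconnected. A rational character has image a finitely generated subgroup of $\mathbb{Q}$, hence cyclic, so each $\chi_k$ is discrete; the hypothesis of the lemma then yields $[\chi_k]\in\Sigma^1(G)^c$. Since $\Sigma^1(G)$ is open in $S(G)$ by Theorem \ref{teosigmaopen}, its complement is closed, and from $[\chi_k]\to[\chi]$ I conclude $[\chi]\in\Sigma^1(G)^c$, as required.

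The one genuinely delicate point is the simultaneous preservation of two different kinds of combinatorial data under perturbation: the set of \emph{alive} vertices and \emph{non-dead} edges (open conditions, stable under small perturbation) and the set of \emph{dead} edges together with the zero-vertices (closed conditions that must be imposed exactly). The argument works precisely because the closed conditions are cut out by \emph{rational} linear equations, which lets me stay inside the subspace $W$ while approximating, and because the open conditions survive any sufficiently small move; openness of $\Sigma^1(G)$ is then what transfers the conclusion from the dense discrete locus to $\chi$ itself.
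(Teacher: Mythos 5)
Your proof is correct, and it is essentially the standard argument: the paper itself does not reprove this lemma but quotes it from \cite{A-K1}, where the reduction is done the same way — dominance of $\mathcal{L}(\chi)$ comes for free from Theorem \ref{teobase}(2) since $\mathcal{L}(\chi)$ and $\mathcal{L}_{\mathcal{F}}(\chi)$ share a vertex set, and the discrete hypothesis is upgraded to arbitrary characters by approximating $\chi$ with rational (hence discrete) characters inside the rationally defined subspace that preserves the zero vertices and dead edges, the alive/non-dead conditions being open, and then invoking openness of $\Sigma^1(G)$ (Theorem \ref{teosigmaopen}) to pass to the limit in the closed set $\Sigma^1(G)^c$. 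Your identification of $Hom(G,\mathbb{R})$ with the rational subspace of $\mathbb{R}^{V(\Gamma)}$ cut out by $\chi(u)=\chi(v)$ over odd-labelled edges, and the observation that rational characters have cyclic image, are exactly the points that make this work.
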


We state some easy facts that were observed in \cite{A-K1}.

\begin{lemma}\label{lema simetria sigma1} \cite{A-K1}
  Let $G_{\Gamma}$ be an Artin group. 
  
  a) $\Sigma ^1(G_{\Gamma})=-\Sigma ^1(G_{\Gamma})$;
  
  b) if $\chi$ a discrete character of $G$ then
  $[\chi]\in\Sigma^1(G_{\Gamma}) $ if and only if $ \Ker(\chi) $  is finitely generated.
\end{lemma}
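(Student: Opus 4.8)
The plan is to derive both parts from a single structural feature of Artin groups: the existence of an automorphism that inverts every standard generator. First I would introduce the map $\iota \colon G_{\Gamma} \to G_{\Gamma}$ determined on generators by $v \mapsto v^{-1}$ for each $v \in V(\Gamma)$, and check that it extends to a group homomorphism by verifying it carries each defining relation to a consequence of the relations. For an edge with label $n$, the relation equates two alternating words of length $n$; writing the alternating word as $(uv)^k$ when $n=2k$ and as $(uv)^k u$ when $n = 2k+1$, one applies $\iota$ and compares the result with the inverse of the original relator. In the even case $(u^{-1}v^{-1})^k = (v^{-1}u^{-1})^k$ is just the inverse of $(uv)^k = (vu)^k$, and in the odd case $(u^{-1}v^{-1})^k u^{-1} = u^{-1}(v^{-1}u^{-1})^k$ and symmetrically, so the image relation is again exactly the inverse of the original. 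Since $\iota$ restricts to an involution on generators, it is an involutive automorphism of $G_{\Gamma}$.

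For part a), I would then observe that for any character $\chi$ one has $\chi \circ \iota = -\chi$, since $\chi(\iota(v)) = \chi(v^{-1}) = -\chi(v)$ on each generator. Applying Lemma \ref{propsigmaepi} to the epimorphism $\iota$ gives the implication $[\mu] \in \Sigma^1(G_{\Gamma})^c \Rightarrow [\mu \circ \iota] = [-\mu] \in \Sigma^1(G_{\Gamma})^c$ for every non-trivial character $\mu$. Running this same implication with $-\mu$ in place of $\mu$ (and using $-(-\mu) = \mu$) upgrades it to an equivalence, so $\Sigma^1(G_{\Gamma})^c = -\Sigma^1(G_{\Gamma})^c$ and hence $\Sigma^1(G_{\Gamma}) = -\Sigma^1(G_{\Gamma})$.

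For part b), I would reduce to the classical criterion for discrete characters. Normalizing so that $\chi \colon G \onto \mathbb{Z}$ with $N = \Ker(\chi)$, the Bieri--Neumann--Strebel result of \cite{BiNeSt} states that $N$ is finitely generated if and only if both $[\chi]$ and $[-\chi]$ lie in $\Sigma^1(G)$. By part a) we have $[\chi] \in \Sigma^1(G_{\Gamma}) \iff [-\chi] \in \Sigma^1(G_{\Gamma})$, so the two one-sided conditions coincide; therefore $[\chi] \in \Sigma^1(G_{\Gamma})$ is equivalent to both holding simultaneously, which in turn is equivalent to $\Ker(\chi)$ being finitely generated. I expect the only genuinely substantive step to be the well-definedness of $\iota$, i.e.\ that $v \mapsto v^{-1}$ respects every defining relation; everything else is either formal functoriality supplied by Lemma \ref{propsigmaepi} or a citation of the known discrete-character criterion. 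The point requiring care is the odd-length relation, where the two alternating sides begin with different letters, and one must confirm that $\iota$ produces precisely the inverse of the original relator rather than an unrelated word.
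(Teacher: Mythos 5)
Your proposal is correct and takes essentially the same route as the paper's (cited) argument: the paper itself uses exactly this inversion automorphism $v \mapsto v^{-1}$ in the proof of Theorem A to obtain the symmetry $\Sigma^1 = -\Sigma^1$, and then, as you do, combines that symmetry with the Bieri--Neumann--Strebel criterion for discrete characters (equivalently, Theorem \ref{teo sigma prop fin} with $m=1$, using $\Sigma^1(G)=\Sigma^1(G,\mathbb{Z})$ and the fact that $S(G,\Ker\chi)=\{[\chi],[-\chi]\}$) to get part b). Your verification that $v \mapsto v^{-1}$ respects both the even- and odd-length Artin relations, which is the one substantive step, is carried out correctly.
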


\section{Fox derivatives} \label{Fox-section}

Let $G$ be a group and $M$ be a left $\mathbb{Z} G$-module. A function
$$d : G \to M$$ is called a derivation if
$$d(gh) = d(g) + g d(h) \hbox{ for every } g,h \in G.$$
Let $F = F(X)$ be a free group with a basis $X$, $x \in X$ and consider the derivation
$$
\frac{\partial}{ \partial x } : F \to \mathbb{Z} F$$
given by $\frac{\partial }{\partial x }(y)= \delta_{x,y}$ the Kroniker symbol for every $y \in X$.

Suppose now that $G$ has a presentation $$\langle X | r_1, \ldots, r_s \rangle.$$  Then by \cite[Prop. 5.4, Chapter II + Exer.~3,p. 45]{Brown} there is an exact complex of left $\mathbb{Z} G$-modules
\begin{equation}  \label{exact1} 
\oplus_{1 \leq i \leq s} \mathbb{Z} G e_{r_i} ~\mapnew{\partial_2}   ~\oplus_{x \in X} \mathbb{Z} G e_x ~\mapnew{\partial_1} ~\mathbb{Z} G ~\mapnew{\partial_0} ~ \mathbb{Z} ~\mapnew{} ~ 0, \end{equation} 
where $\partial_0 = \epsilon$ is the augmentation map, $\partial_1(e_x) = \overline{x} - 1$ where overlining denotes the image of an element of $\mathbb{Z} F$ in $\mathbb{Z} G$ and for $r \in \{ r_1, \ldots, r_s \}$
$$\partial_2( e_r) = \sum_{x \in X} \overline{\frac{\partial }{\partial x}(r)} e_x.$$
We fix the commutator notation $[a,b] = a^{ -1} b^{ -1}  ab$.

\begin{lemma} \label{Fox-1} Let $s \in X$ and $a,b \in F(X)$. Then \begin{equation} \label{Fox}  \frac{\partial}{\partial s} ( [a,b]) =a^{ -1} ( b^{ -1} - 1) \frac{\partial}{\partial s}  ( a ) + a^{ -1}  b^{  - 1} (a -1) \frac{\partial}{\partial s}  ( b).\end{equation} 
In particular for $x, y \in X$ with $x \not= y$ and $r = [x,y] \in \{ r_1, \ldots, r_s \}$ we have
$$\partial_2(e_r) = \overline{ x^{-1} y^{-1}} (  \overline{ (x-1)} e_y - \overline{ (y - 1)} e_x).$$
\end{lemma}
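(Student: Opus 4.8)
The plan is to obtain the general formula \eqref{Fox} directly from the defining property of a derivation and then read off the special case by specializing $a=x$, $b=y$.

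Write $d=\frac{\partial}{\partial s}$. First I would record two standard consequences of the identity $d(gh)=d(g)+g\,d(h)$: applying it to $1=1\cdot 1$ gives $d(1)=0$, and applying it to $1=g\,g^{-1}$ then gives the inverse rule
$$d(g^{-1})=-g^{-1}d(g).$$
Next I would expand $d([a,b])=d(a^{-1}b^{-1}ab)$ by using the product rule three times, producing
$$d(a^{-1})+a^{-1}d(b^{-1})+a^{-1}b^{-1}d(a)+a^{-1}b^{-1}a\,d(b).$$
Substituting the inverse rule for $d(a^{-1})$ and $d(b^{-1})$ and then collecting the coefficients of $d(a)$ and of $d(b)$ yields $a^{-1}(b^{-1}-1)d(a)+a^{-1}b^{-1}(a-1)d(b)$, which is exactly \eqref{Fox}.

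For the special case $r=[x,y]$ with $x\neq y$, I would apply \eqref{Fox} with $a=x$, $b=y$, using $\frac{\partial}{\partial z}(x)=\delta_{x,z}$ and $\frac{\partial}{\partial z}(y)=\delta_{y,z}$. Only $z=x$ and $z=y$ survive, giving $\frac{\partial}{\partial x}(r)=x^{-1}(y^{-1}-1)$ and $\frac{\partial}{\partial y}(r)=x^{-1}y^{-1}(x-1)$. Substituting these into $\partial_2(e_r)=\sum_{z\in X}\overline{\frac{\partial}{\partial z}(r)}\,e_z$ gives $\overline{x^{-1}(y^{-1}-1)}\,e_x+\overline{x^{-1}y^{-1}(x-1)}\,e_y$, and rewriting the first coefficient via $x^{-1}(y^{-1}-1)=-x^{-1}y^{-1}(y-1)$ (using $y^{-1}y=1$) lets me factor out $\overline{x^{-1}y^{-1}}$ and recover the displayed expression. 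The computation is entirely mechanical; the only place demanding care is this last sign rearrangement, where one must use $\overline{y^{-1}y}=1$ to pull $\overline{x^{-1}y^{-1}}$ out of the $e_x$-coefficient without dropping the minus sign.
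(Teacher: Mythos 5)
Your proposal is correct and follows essentially the same route as the paper: expand $\frac{\partial}{\partial s}([a,b])$ via the product rule, substitute the inverse rule $\frac{\partial}{\partial s}(g^{-1})=-g^{-1}\frac{\partial}{\partial s}(g)$, and collect the coefficients of $\frac{\partial}{\partial s}(a)$ and $\frac{\partial}{\partial s}(b)$. The only difference is that you spell out details the paper leaves implicit (deriving $d(1)=0$ and the inverse rule, and the specialization $a=x$, $b=y$ with the sign rearrangement $x^{-1}(y^{-1}-1)=-x^{-1}y^{-1}(y-1)$), which is fine.
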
 \begin{proof}
$$\frac{\partial}{\partial s} ( [a,b]) = \frac{\partial}{\partial s}  ( a^{ -1} b^{ -1} a b) = \frac{\partial}{\partial s}  ( a^{ -1} )  + a^{ -1}  \frac{\partial}{\partial s}  ( b^{-1}) + a^{ -1} b^{  - 1} \frac{\partial}{\partial s} ( a) + a^{ -1} b^{  - 1} a \frac{\partial}{\partial s}  ( b)  =$$
$$- a^{ -1}\frac{\partial}{\partial s}  ( a )  - a^{ -1} b^{ -1} \frac{\partial}{\partial s}  ( b) + a^{ -1} b^{  - 1} \frac{\partial}{\partial s} ( a) + a^{ -1} b^{  - 1} a \frac{\partial}{\partial s}  ( b)=$$
$$
a^{ -1} ( b^{ -1} - 1) \frac{\partial}{\partial s}  ( a ) + a^{ -1}  b^{  - 1} (a -1) \frac{\partial}{\partial s}  ( b).$$
\end{proof}
\begin{lemma} \label{Fox-2} Suppose $x, y \in X$ with $x \not= y$ and $m \geq 2$ an integer. Then for  $r = [(xy)^m, x]$ we have
$$\frac{\partial }{ \partial y}(r) = (xy)^{ -m} ( x^{ -1} - 1)( 1 + xy + \ldots + (xy)^{ m-1})  x$$ and $$\frac{\partial }{ \partial x} (r) = 
(xy)^{ -m}( y - 1) ( 1 + xy + \ldots + (xy)^{ m-1})
.$$
In particular if $r \in \{ r_1, \ldots, r_s \}$  we have that the image $\lambda$ of $\partial_2(e_r)$ in $
\mathbb{Z} Q e_x \oplus \mathbb{Z} Q e_y$  is
$$\lambda = (x_0y_0)^{ -m} ( 1 + x_0y_0 + \ldots + (x_0y_0)^{ m-1}) ((y_0 -1) e_x - (x_0-1) e_y),
$$
where  $Q = G/ G'$, $x_0$ is the image of $x$ in $Q$ and $y_0$ is the image of $y$ in $Q$.
\end{lemma}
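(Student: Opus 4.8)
The plan is to apply Lemma \ref{Fox-1} directly with $a = (xy)^m$ and $b = x$, so that everything reduces to the Fox derivatives of $xy$ and of its powers. First I would record the basic values $\frac{\partial}{\partial x}(x)=1$, $\frac{\partial}{\partial y}(x)=0$, $\frac{\partial}{\partial x}(y)=0$, $\frac{\partial}{\partial y}(y)=1$, so that the derivation property gives $\frac{\partial}{\partial x}(xy)=1$ and $\frac{\partial}{\partial y}(xy)=x$. The one genuine computational ingredient is the power rule
$$\frac{\partial}{\partial s}\big((xy)^m\big) = \big(1 + xy + \cdots + (xy)^{m-1}\big)\frac{\partial}{\partial s}(xy),$$
which I would obtain by a one-line induction on $m$ from $\frac{\partial}{\partial s}(w^m) = \frac{\partial}{\partial s}(w) + w\,\frac{\partial}{\partial s}(w^{m-1})$ with $w = xy$. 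Writing $S = 1 + xy + \cdots + (xy)^{m-1}$, this yields $\frac{\partial}{\partial y}((xy)^m)=Sx$ and $\frac{\partial}{\partial x}((xy)^m)=S$.

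Next I would substitute these into formula (\ref{Fox}). For the $y$-derivative the second summand of (\ref{Fox}) vanishes because $\frac{\partial}{\partial y}(x)=0$, leaving $\frac{\partial}{\partial y}(r) = (xy)^{-m}(x^{-1}-1)Sx$, which is the first claimed identity. For the $x$-derivative both summands survive, giving $(xy)^{-m}\big[(x^{-1}-1)S + x^{-1}((xy)^m - 1)\big]$. Here the single simplification requiring care is the telescoping identity $(xy)^m - 1 = (xy-1)S$, which lets me rewrite $x^{-1}((xy)^m-1) = x^{-1}(xy-1)S = (y - x^{-1})S$; adding the first summand then collapses the bracket to $(y-1)S$, so $\frac{\partial}{\partial x}(r) = (xy)^{-m}(y-1)S$, as required.

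For the ``in particular'' assertion I would pass to $Q = G/G'$, where the images $x_0, y_0$ commute and $S$ maps to $S_0 = 1 + x_0 y_0 + \cdots + (x_0 y_0)^{m-1}$. The projection of $\partial_2(e_r)$ to $\mathbb{Z}Q e_x \oplus \mathbb{Z}Q e_y$ is $\overline{\frac{\partial}{\partial x}(r)}\,e_x + \overline{\frac{\partial}{\partial y}(r)}\,e_y$. The $e_x$-coefficient is $(x_0 y_0)^{-m}(y_0 - 1)S_0$ directly. In the $e_y$-coefficient the trailing $x$ in $(x^{-1}-1)Sx$ commutes past $S$ and combines with $x^{-1}-1$ to give $(1 - x_0)S_0 = -(x_0 - 1)S_0$. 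Factoring out the common scalar $(x_0 y_0)^{-m}S_0$ then produces exactly
$$\lambda = (x_0 y_0)^{-m}\big(1 + x_0 y_0 + \cdots + (x_0 y_0)^{m-1}\big)\big((y_0 - 1)e_x - (x_0 - 1)e_y\big).$$

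The computation is essentially mechanical once the power rule is established; the only places demanding attention are the collapse of the $x$-derivative, i.e. recognizing the telescoping $(xy-1)S = (xy)^m - 1$ together with the cancellation $x^{-1}(xy-1) = y - x^{-1}$, and the bookkeeping of how the trailing factor $x$ in the $y$-derivative behaves after abelianizing to $Q$. Neither presents a real obstacle, but both must be handled carefully to land on the stated normal forms.
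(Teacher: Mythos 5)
Your proposal is correct and follows essentially the same route as the paper's proof: apply Lemma \ref{Fox-1} with $a=(xy)^m$, $b=x$, invoke the power rule for Fox derivatives, collapse the $x$-derivative via the telescoping identity $(xy)^m-1=(xy-1)(1+xy+\cdots+(xy)^{m-1})$, and abelianize to get $\lambda$. The only cosmetic difference is that you make the induction behind the power rule and the vanishing of $\frac{\partial}{\partial y}(x)$ explicit, which the paper leaves implicit.
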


\begin{proof}
By (\ref{Fox})
$$\frac{\partial}{\partial s}  (r) = \frac{\partial}{\partial s} ( [(xy)^m, x]) =$$ $$ (xy)^{ -m} ( x^{ -1} - 1) \frac{\partial}{\partial s}  ( (xy)^m ) + (xy)^{ -m}  x^{  - 1} ( (xy)^m -1) \frac{\partial}{\partial s}  ( x) = $$
$$ (xy)^{ -m} ( x^{ -1} - 1)( 1 + xy + \ldots + (xy)^{ m-1}) \frac{\partial}{\partial s}  ( xy) + (xy)^{ -m}  x^{  - 1} ( (xy)^m -1) \frac{\partial}{\partial s}  ( x).$$
Then
$${\frac{\partial}{\partial x}  (r)} = (xy)^{ -m} ( x^{ -1} - 1)( 1 + xy + \ldots + (xy)^{ m-1}) \frac{\partial}{\partial x}  ( xy) +$$ $$ (xy)^{ -m}  x^{  - 1} ( (xy)^m -1) \frac{\partial}{\partial x}  ( x)=$$
$$
(xy)^{ -m} ( x^{ -1} - 1)( 1 + xy + \ldots + (xy)^{ m-1}) + (xy)^{ -m}  x^{  - 1} ( (xy)^m -1) =$$
$$
(xy)^{ -m}( x^{ -1} - 1 +  x^{  - 1}(xy - 1)) ( 1 + xy + \ldots + (xy)^{ m-1})  = 
$$
$$
(xy)^{ -m}( y - 1) ( 1 + xy + \ldots + (xy)^{ m-1})
$$
and
$$\frac{\partial}{\partial y}  (r) = (xy)^{ -m} ( x^{ -1} - 1)( 1 + xy + \ldots + (xy)^{ m-1}) \frac{\partial}{\partial y}  ( xy) +$$ $$ (xy)^{ -m}  x^{  - 1} ( (xy)^m -1) \frac{\partial}{\partial y}  ( x)= (xy)^{ -m} ( x^{ -1} - 1)( 1 + xy + \ldots + (xy)^{ m-1})  x.$$
 Then for the image $\lambda$ of $\partial_2(e_r)$ in $
\mathbb{Z} Q e_x \oplus \mathbb{Z} Q e_y$  we have
$$\lambda = (x_0y_0)^{ -m} ( y_0 - 1)( 1 + x_0y_0 + \ldots + (x_0y_0)^{ m-1})  e_x + $$ $$(x_0y_0)^{ -m} ( x_0^{ -1} - 1)( 1 + x_0y_0 + \ldots + (x_0y_0)^{ m-1})  x_0 e_y = $$
$$ (x_0y_0)^{ -m} ( 1 + x_0y_0 + \ldots + (x_0y_0)^{ m-1}) \gamma,$$
where
$\gamma = (y_0 - 1) e_x + (x_0^{ -1 } -1)x_0 e_y = (y_0 -1) e_x - (x_0-1) e_y.$

\end{proof}
\section{One algebraic lemma} 

\begin{lemma} \label{alg} Let $T$ be a forest,  where the set of vertices is a disjoint union $V \cup W$ and each edge links a vertex of $V$ with a vertex of $W$ that is labelled by an even positive integer $2 m_{v,w} > 2$. Let $Q$ be the free abelian group with free basis $V \cup W$ and consider the $\mathbb{Z} Q$-module
$$K_{T} = (\oplus_{v \in V, w \in W} \mathbb{Z} Q  f_{v,w} )/ J,$$
where $\mathbb{Z} Q  f_{v,w}$ is a free $\mathbb{Z} Q$-module and $J$ is the $\mathbb{Z} Q$-submodule generated by 
$$\{ (1 + vw + \ldots + (v w)^{ m_{v,w} - 1}) f_{v,w} ~| ~v \in V, w \in W, v,w\hbox{ are vertices of an edge in } T\},$$
 $$\{ (t-1) f_{v,w} - (v-1) f_{t,w} ~| ~ v,t \in V, w \in W \}$$ and
$$\{ (s-1) f_{v,w} - (w-1) f_{v,s} ~| ~ v \in V, w,s \in W \}. $$
Let $$\chi : Q \to \mathbb{Z}$$ be a character (i.e.  homomorphism of groups) such that for every vertex $u \in V(\Gamma)$ we have $\chi(u) \not= 0$ and for every edge in $T$ with vertices $v$ and $w$ we have $\chi(v) = - \chi(w)$.
Then $K_{T}$ is not finitely generated as $\mathbb{Z} \Ker (\chi)$-module.
\end{lemma}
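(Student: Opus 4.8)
The plan is to detect the failure of finite generation geometrically, by showing that the support of $K_T$ contains an entire fibre of the projection dual to $\chi$. Note first that $K_T$ is a finitely generated $A$-module, where $A=\mathbb{Z}Q$, being a quotient of the finite free module $\oplus_{v\in V,w\in W}\mathbb{Z}Q\,f_{v,w}$. Since finite generation over $\mathbb{Z}\Ker(\chi)$ is inherited by $-\otimes_{\mathbb{Z}}\mathbb{C}$, it suffices to prove that $K_T\otimes_{\mathbb{Z}}\mathbb{C}$ is not finitely generated over $S:=\mathbb{C}[\Ker(\chi)]$. As $Q/\Ker(\chi)\cong\mathbb{Z}$, I would fix $t\in Q$ with $\chi(t)$ a generator of the image, so that $Q=\Ker(\chi)\oplus\langle t\rangle$ and $A\otimes\mathbb{C}=S[t,t^{-1}]$. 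If $K_T\otimes\mathbb{C}$ were finitely generated over $S$, then for every maximal ideal $\mathfrak m\subset S$ the fibre $(K_T\otimes\mathbb{C})\otimes_S(S/\mathfrak m)$ would be finite dimensional over $\mathbb{C}$; the goal is to exhibit one $\mathfrak m$ for which this fibre is infinite dimensional.

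The key point, and the place where the hypothesis that $T$ is a \emph{forest} is used, is the construction of a suitable point $\lambda_0\colon Q\to\mathbb{C}^*$. For each edge $(v,w)$ of $T$ choose a root $\zeta_{v,w}\in\mathbb{C}^*$ of $p_{v,w}(X):=1+X+\cdots+X^{m_{v,w}-1}$; such a root exists because $m_{v,w}\ge 2$, indeed $\zeta_{v,w}$ is a nontrivial $m_{v,w}$-th root of unity. Because $T$ has no cycles, I can assign values $\lambda_0(u)\in\mathbb{C}^*$ to the vertices with $\lambda_0(v)\lambda_0(w)=\zeta_{v,w}$ for every edge $(v,w)$: choose a vertex in each tree, give it value $1$, and propagate outward along edges, never encountering a constraint. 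This is exactly where acyclicity is essential — a cycle would impose a multiplicative condition on the prescribed $\zeta_{v,w}$ that cannot be met in general. Let $\mathfrak m\subset S$ be the maximal ideal at the restriction $\lambda_0|_{\Ker(\chi)}$, so that $(A\otimes\mathbb{C})/\mathfrak m(A\otimes\mathbb{C})\cong\mathbb{C}[t,t^{-1}]$, whose $\mathbb{C}$-points are precisely the characters $\lambda_s(u)=\lambda_0(u)\,s^{\chi(u)}$, $s\in\mathbb{C}^*$ (one checks directly that $\lambda_s|_{\Ker(\chi)}=\lambda_0|_{\Ker(\chi)}$ for all $s$).

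It then remains to show that the fibre $\bar K:=(K_T\otimes\mathbb{C})\otimes_{A\otimes\mathbb{C}}\mathbb{C}[t^{\pm1}]$, a finitely generated module over the PID $\mathbb{C}[t^{\pm1}]$, is nonzero at all but finitely many $s$; such a module cannot be torsion, hence is infinite dimensional over $\mathbb{C}$, giving the desired contradiction. Evaluating the relations at $\lambda_s$: a relation of the first type specialises to $p_{v,w}(\lambda_s(v)\lambda_s(w))\,f_{v,w}$, and since $\chi(v)=-\chi(w)$ on an edge we get $\lambda_s(v)\lambda_s(w)=\lambda_0(v)\lambda_0(w)=\zeta_{v,w}$, so this vanishes identically in $s$. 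For the remaining relations I would use the linear functional $\phi_s$ on $\oplus_{v,w}\mathbb{C}\,f_{v,w}$ given by $\phi_s(f_{v,w})=(\lambda_s(v)-1)(\lambda_s(w)-1)$; a one-line computation shows $\phi_s$ annihilates every relation of the second and third type, and $\phi_s\neq 0$ provided all $\lambda_s(u)\neq 1$. Because each $\chi(u)\neq 0$, the equation $\lambda_0(u)\,s^{\chi(u)}=1$ has finitely many solutions $s$, so for all but finitely many $s$ the functional $\phi_s$ descends to a nonzero functional on the fibre $K_T\otimes\mathbb{C}_{\lambda_s}$, forcing it to be nonzero. The main obstacle is precisely the consistent choice of $\lambda_0$, which compels the forest hypothesis, together with verifying that $\phi_s$ is a nonzero relation-annihilating functional along the whole vertical line — for which the condition $\chi(u)\neq 0$ is indispensable.
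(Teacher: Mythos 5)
Your proof is correct and follows essentially the same route as the paper: your character $\lambda_0$ (value $1$ at a base point of each tree, propagated along edges so that $\lambda_0(v)\lambda_0(w)=\zeta_{v,w}$, which is where both proofs use the forest hypothesis) is exactly the paper's ring homomorphism $\mu:\mathbb{C}Q\to\mathbb{C}[x^{\pm1}]$, $u\mapsto \lambda_0(u)x^{\chi(u)}$, and your fibre $\bar K$ coincides with the paper's module $\widehat K_T$. The only divergence is the finishing step: the paper inverts all elements $cx^j-1$ so that $\widehat K_T$ becomes free of rank one over the localized ring and hence has zero annihilator over $\mathbb{C}[x^{\pm1}]$, whereas you evaluate at the points $\lambda_s$ and use the relation-annihilating functional $\phi_s(f_{v,w})=(\lambda_s(v)-1)(\lambda_s(w)-1)$ to show almost all point-fibres are nonzero; both are routine ways to conclude that the same finitely generated $\mathbb{C}[t^{\pm1}]$-module is infinite dimensional over $\mathbb{C}$.
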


\begin{proof} 

Suppose that  there is an edge in $T$ with vertices $v \in V, w \in W$. We define $\lambda_{v,w}  \in \mathbb{C}$ as a primitive $m_{v,w}$-th root of 1. 

We decompose $T $ as a disjoint union $\cup_i T_i$ where each $T_i$ is a tree.
Let $v_i$ be a  fixed base point of $T_i$.  
Consider the Laurent polynomial ring $\mathbb{C} [x^{ \pm 1}]$ and the ring homomorphism
$$\mu : \mathbb{C} Q \to \mathbb{C} [x^{  \pm 1}]$$
that sends 

1) $vw$ to $\lambda_{v,w}$ for all $v \in V, w \in W$ vertices of an edge of $T$, 

2) $v_i$ to $x^{ \chi(v_i)}$,

3) is the identity on $\mathbb{C}$.

Note that $\mu$ is well-defined because $T$ is a forest and that $$\mu(\Ker(\chi)) \subseteq \mathbb{C}.$$ Furthermore for $q \in Q$ we have that
$$\mu(q) \in \mathbb{C} x^{ \chi(q)},\hbox{ hence }
\mu(Q) \subseteq (\cup_{ j \in \mathbb{Z} \setminus \{ 0 \} }\mathbb{C} x^j).$$
Then $\mu$ induces an epimorphism of $\mathbb{C} Q$-modules $$\widehat{\mu} : \mathbb{C} \otimes_{\mathbb{Z}}  K_{T}  \to (\oplus_{v \in V, w \in W}  \mathbb{C} [ x^{  \pm 1}] f_{v,w})/ R = : \widehat{K}_{T}$$
that is the identity on $f_{v,w}$,
where $R$ is the $\mathbb{C} [ x^{  \pm 1}]$-submodule generated by
 $$\{ (\mu(t)-1) f_{v,w} - (\mu(v)-1) f_{t,w} ~| ~ v,t \in V, w \in W \}$$
 and
$$\{ (\mu(s)-1) f_{v,w} - (\mu(w)-1) f_{v,s} ~| ~ 
v \in V, w,s \in W\}$$ 
and we view $\widehat{K}_{T}$ as $\mathbb{C} Q$-module via $\mu$.

Let $D$ be the localised ring $\mathbb{C}[x^{  \pm 1}, \{ \frac{1}{c x^j- 1} ~| ~j \in \mathbb{Z} \setminus \{ 0 \},  c \in \mathbb{C} \} ]$. Since the coefficients of $\mathbb{C} Q$ that appear in the generators of $R$ are invertible in $D$, we  have that $D \otimes_{\mathbb{C}[v_0^{  \pm 1} ]} \widehat{K}_T$ is a cyclic $D$-module isomorphic to $D$.  Hence for the annihilator of $D \otimes_{\mathbb{C}[x^{  \pm 1} ]} \widehat{K}_T$ in $D$ we have
$ann_D( D \otimes_{\mathbb{C}[x^{  \pm 1}]} \widehat{K}_T) = ann_D(D) = 0$, thus
$ann_{ \mathbb{C}[x^{  \pm 1} ] } (\widehat{K}_T) = 0$.
This implies that
$$\widehat{K}_{T} \hbox{ is infinite dimensional over } \mathbb{C}.$$

Finally if $K_{T}$ is finitely generated as $\mathbb{Z} \Ker(\chi)$-module then $ \mathbb{C}  \otimes_{\mathbb{Z}} K_{T}$ is finitely generated as $\mathbb{C} \Ker(\chi)$-module. Then applying $\widehat{\mu}$  we obtain that
$\widehat{K}_{T}$ is finitely generated as ${\mu}(\mathbb{C} \Ker(\chi))$-module. But since $\mu( \Ker(\chi)) \subseteq \mathbb{C}$ we deduce that ${\mu}(\mathbb{C} \Ker(\chi)) = \mathbb{C} \mu(\Ker(\chi)) = \mathbb{C}.$ Thus $\widehat{K}_{T}$ is finite dimensional over $\mathbb{C}$, a contradiction.
\end{proof}

\section{Proof of Theorem A} 
 Let $G = G_{\Gamma}$ be an even Artin group such that if there is a closed reduced path in $\Gamma$ with all labels bigger than 2 then the length of such path is always odd. Let $\chi : G \to \mathbb{Z}$ be a non-zero character such that  $\mathcal{L}_{\mathcal F}(\chi)$ is connected and $\mathcal{L}(\chi)$ is not connected. 
By Lemma \ref{corprob} to prove Theorem A is equivalent to show that $[\chi] \notin \Sigma^1(G)$.

\medskip
I. We suppose first that $\chi(v ) \not= 0$ for every $v \in V(\Gamma)$. 
Since the graph $\mathcal{L}(\chi)$  is not connected $V(\Gamma) = V(\Gamma_1) \cup V(\Gamma_2),$ where the union is disjoint, $\Gamma_1$ and $\Gamma_2$ are complete subgraphs of $\Gamma$ ( i.e. if an edge $e$ connects two vertices from $\Gamma_i$ in $\Gamma$ then this edge belongs to $E(\Gamma_i)$) and there is  a disjoint union $E(\Gamma) = E(\Gamma_1) \cup E(\Gamma_2)  \cup E,$ where $E$ is a set of dead edges  (with respect to the character $\chi$) i.e. if $u,v$ are the vertices of such an edge then $\chi(u) = - \chi(v)$. Let $\gamma$ be a closed reduced path with edges in $E$. The bipartite structure of the graph $\Gamma_0$ implies that $\gamma$ has  even length, a contradiction with the assumption that a closed reduced path in $\Gamma$ with all labels bigger than 2 has always odd length. Hence no such path $\gamma$ exists i.e. $E$ is a forest.

Let ${\Gamma}_0$ be the graph with a set of vertices $V({\Gamma}_0) = V(\Gamma)$, for $ i = 1,2 $ every two vertices of $V(\Gamma_i)$ are linked in $\Gamma_0$ by an edge with label 2  and the edges in $\Gamma_0$  that link some vertices of $V(\Gamma_1)$ with some  vertices of $V(\Gamma_2)$ are precisely the edges from $E$  with their original labels in $\Gamma$. 
Let $G_0 = G_{\Gamma_0}$ be the Artin group with underlying graph $\Gamma_0$.
Our aim is to calculate $G_0'/ G_0''$ via the isomorphism $$G_0'/ G_0'' \simeq H_1(G_0', \mathbb{Z}).$$ By (\ref{exact1}) there is an exact complex 
$$
{\mathcal P} : \oplus_r  ~ \mathbb{Z}G_0 e_r  \mapnew{\partial_2} \oplus_v \mathbb{Z} G_0  e_v \mapnew{\partial_1} \mathbb{Z} G_0 \to \mathbb{Z}\to 0,$$
where each relation $r$ corresponds to one edge of $\Gamma_0$ and $v$ runs through the vertices $V(\Gamma_0)$.
We have two type of relations : $r = [v_1, v_2]$ where both $v_1, v_2 $ are vertices of $V(\Gamma_i)$ for a fixed $i \in \{ 1,2 \}$ or $r = [(vw)^m, v]$, where  $ v \in V(\Gamma_1)$, $ w \in V(\Gamma_2)$ and there is an edge in $E$ with label $2m> 2$ that links $v$ and $w$. By (\ref{exact1})
$$\partial_1(e_v) = \overline{ v}-1 \in \mathbb{Z} G_0  ~\hbox{ 
and } ~
\partial_2( e_r) = \sum_{v} \overline{\frac{\partial }{ \partial v} (r)} e_v.$$
 Thus for $Q = G_0/ G_0'$ free abelian group with a free abelian basis $X = V(\Gamma_0)$ we have the complex
$$
\mathcal {S} = \mathbb{Z} \otimes_{\mathbb{Z} G_0'} \mathcal{P} :  ~ \oplus_r \mathbb{Z}Q  e_r  ~\mapnew{\widetilde{\partial}_2} ~\oplus_v  \mathbb{Z}Q  e_v  ~\mapnew{\widetilde{\partial}_1}~\mathbb{Z} Q ~\mapnew{} ~ \mathbb{Z}~\mapnew{}  ~0$$ and
$$H_1(\mathcal{S}, \mathbb{Z}) = Ker (\widetilde{\partial}_1)/ Im (\widetilde{\partial}_2).$$
By abuse of notation we write $v$ for the image of $v \in V(\Gamma_0) \subset G_0$ in $Q = G_0/ G_0'$. Thus
$$\widetilde{\partial}_1(e_v) = v-1.$$By Lemma 
\ref{Fox-1}   for $r = [v_1, v_2]$ for $v_1, v_2 \in V(\Gamma_i)$, $v_1 \not= v_2$, $i = 1,2$ we have 
$$\widetilde{\partial}_2(e_r) = v_1^{-1} v_2^{-1}(  ( v_1-1) e_{v_2} - (v_2 - 1)e_{v_1})$$ and by
Lemma \ref{Fox-2} for $r = [(v_1v_2)^m, v_2]$ for some $v_1, v_2 \in V(\Gamma)$, where $v_1$ and $v_2$ are linked by an edge from $E$ with label $2m$, we have 
$$\widetilde{\partial}_2(e_r) = (v_1 v_2)^{ -m} ( 1 + v_1 v_2+ \ldots + (v_1v_2)^{ m-1}) ((v_2 -1) e_{v_1} - (v_1-1) e_{v_2}).$$
On the other hand we have the exact Koszul complex
$$
{\mathcal R} : \ldots  \to \oplus_{v_1 < v_2 < v_3} \mathbb{Z} Q e_{v_1} \wedge e_{v_2} \wedge e_{v_3}  \mapnew{d_3} \oplus_{v_1 < v_2} \mathbb{Z} Q e_{v_1} \wedge e_{v_2}  \mapnew{d_2}  
$$ $$\oplus_{v_1}  \mathbb{Z} Q  e_{v_1} \mapnew{d_1} \mathbb{Z} Q \to \mathbb{Z} \to 0,$$
where in the direct sums $v_1, v_2, v_3 \in V(\Gamma_0)$, $\leq $ is a fixed linear order of $V(\Gamma)$ and ${\mathcal R}$ has differentials
$d_1 = \widetilde{\partial}_1,$
$$d_2(e_{v_1} \wedge e_{v_2}) =  (v_2 - 1) e_{v_1} - (v_1 - 1) e_{v_2} $$
and
$$d_3 (e_{v_1} \wedge e_{v_2} \wedge e_{v_3} ) = (v_1 -1) e_{v_2} \wedge e_{v_3}  + (v_2 - 1) e_{v_3} \wedge e_{v_1} +  (v_3 - 1) e_{v_1} \wedge e_{v_2}.$$ Then
$$H_1(\mathcal{S}, \mathbb{Z}) = Ker (\widetilde{\partial}_1)/ Im (\widetilde{\partial}_2) = Ker ( d_1)/ Im (\widetilde{\partial}_2) = Im ( d_2)/ Im (\widetilde{\partial}_2).$$
Note that by the exactness of the Koszul complex $\mathcal{R}$ we have
$$Im (d_2) \simeq  (\oplus_{v_1 <  v_2}  \mathbb{Z} Q e_{v_1} \wedge e_{v_2}  )/ Ker (d_2) = (\oplus_{v_1 < v_2}  \mathbb{Z} Q e_{v_1} \wedge e_{v_2} ) / Im (d_3),$$ where the first isomorphism is given by the classical theorem of the isomorphism (associated to any homomorphism of groups). This isomorphism sends $Im (\widetilde{\partial}_2)$ to $d_2^{ -1} ( Im (\widetilde{\partial}_2))/ Im (d_3)$.
Hence
$$
Im ( d_2)/ Im (\widetilde{\partial}_2) \simeq (\oplus_{v_1 < v_2}  \mathbb{Z} Qe_{v_1} \wedge e_{v_2} ) / d_2^{ -1} ( Im (\widetilde{\partial}_2)) = : M,$$
where
$d_2^{ -1} ( Im (\widetilde{\partial}_2))$ is the $\mathbb{Z} Q$-submodule of $\oplus_{v_1 < v_2}  \mathbb{Z} Qe_{v_1} \wedge e_{v_2} $ generated by  $Im (d_3)$, 
$e_{v_1} \wedge e_{v_2}$ if both $v_1, v_2$ are vertices in $\Gamma_i$ (for $i = 1,2$) and  $( 1 + vw + \ldots + (vw)^{ m-1}) e_v \wedge e_w $ if $v \in V(\Gamma_1), w \in V(\Gamma_2)$ and $v,w$ are vertices of an edge from $E$ with label $2m > 2$.
 
 Consider the projection
$$\pi : \oplus_{v_1 <  v_2}  \mathbb{Z} Q e_{v_1} \wedge e_{v_2} \to \oplus_{v \in V(\Gamma_1), w \in V(\Gamma_2)}  \mathbb{Z} Q  e_v \wedge e_w $$ that sends $e_{v_1} \wedge e_{v_2}$ to 0 if both $v_1, v_2$ are vertices in $\Gamma_i$ (for $i = 1,2$)  and is the identity on $e_v \wedge e_w$ if $v \in V(\Gamma_1), w \in V(\Gamma_2)$. Note that $\Ker (\pi) \subseteq d_2^{ -1} ( Im (\widetilde{\partial}_2))$.
Then $\pi$ induces the isomorphism
$$
M \simeq (\oplus_{v \in V(\Gamma_1), w \in V(\Gamma_2)} \mathbb{Z} Q e_v \wedge e_w  )/ N,$$
where
$N = \pi( d_2^{ -1} ( Im (\widetilde{\partial}_2)) )$ is the $\mathbb{Z} Q$-submodule generated by $$ \{ ( 1 + vw + \ldots + (vw)^{ m-1}) e_v \wedge e_w ~|$$ $$ ~v \in V(\Gamma_1),w \in V(\Gamma_2), v,w \hbox{ are vertices of } e \in E \hbox{ with label }2m > 2 \} $$ $$\cup ~ \{ \pi(d_3(e_v \wedge e_w \wedge e_t)) ~| ~ v \in V(\Gamma_1),w \in V(\Gamma_2), t \in V(\Gamma)  = V(\Gamma_1) \cup V(\Gamma_2) \}.$$
Thus if $t \in V(\Gamma_1)$ we have
$$ \pi(d_3(e_v \wedge e_w \wedge e_t)) = \pi(  (t-1) e_v \wedge e_w + (v-1) e_w \wedge e_t  + ( w - 1) e_t \wedge e_v )= $$ $$ (t-1)e_v \wedge e_w + (v-1) e_w \wedge e_t $$
and if $t \in V(\Gamma_2)$ we have
$$ \pi(d_3(e_v \wedge e_w \wedge e_t)) = \pi(  (t-1) e_v \wedge e_w + (v-1) e_w \wedge e_t  + ( w - 1) e_t \wedge e_v )= $$
$$ (t-1) e_v \wedge e_w  + ( w - 1) e_t \wedge e_v. $$
Note that using the notation $K_T$ from  Lemma \ref{alg} for $T = E$, $V = V(\Gamma_1)$ and $W = V(\Gamma_2)$ we have proved by now that
\begin{equation} \label{vazno} G_0'/ G_0'' \simeq H_1(G_0, \mathbb{Z}) \simeq (\oplus_{v \in V(\Gamma_1), w \in V(\Gamma_2)} \mathbb{Z} Q e_v \wedge e_w  )/ N \simeq  K_E.\end{equation}

Consider the epimorphism of groups
$$\theta: G = G_{\Gamma} \to G_0 = G_{\Gamma_0}$$
which is the identity on $V(\Gamma)$ and note that $Ker(\theta) \subseteq G'$. Let $\chi_0 : G_0 \to \mathbb{Z}$ be the character induced by $\chi$, i.e. $\chi_0 \circ \theta = \chi$, and
 $\nu_0 : G_0/ G_0'\to \mathbb{Z}$ and $\overline{\chi}_0 : G_0 / G_0'' \to \mathbb{Z}$ be the characters induced by $\chi_0$.
By Lemma \ref{alg} $K_E$ is not finitely generated as $\mathbb{Z} \Ker(\nu_0)$-module, hence by (\ref{vazno})
$$G_0'/ G_0'' \hbox{ is not finitely generated  as }\mathbb{Z} \Ker(\nu_0)-\hbox{module.}$$ Furthermore since $G_0$ has an automorphism $\varphi$ that sends each vertex $v \in V(\Gamma_0)$ to $v^{-1}$, we note that $\varphi$ induces an automorphism of $G_0/G_0''$. This implies that $\Sigma^1(G_0/ G_0'') = - \Sigma^1(G_0/ G_0 '')$. Hence \begin{equation} \label{eqq} [\overline{\chi}_0] \notin \Sigma^1(G_0/G_0'')\end{equation} otherwise $[\overline{\chi}_0] \in \Sigma^1(G_0/G_0'') \cap - \Sigma^1(G_0/G_0'')$, $\Ker(\overline{\chi}_0)$ is finitely generated and hence $G_0' /G_0''$ is finitely generated as $\mathbb{Z} \Ker(\nu_0)$-module, a contradiction.   

Let $\overline{\chi} : G/ G'' \to \mathbb{Z}$ be the character  induced by $\chi$. Note that since $G_0/ G_0''$ is a quotient of $G/ G''$, Lemma \ref{propsigmaepi}  and (\ref{eqq}) imply that 
\begin{equation}  \label{eqq2} [\overline{\chi}] \notin \Sigma^1(G/ G''). \end{equation}
By (\ref{eqq2}) and Lemma \ref{propsigmaepi} applied for the canonical epimorphism $G \to G/ G''$  we get that $[\chi] \notin \Sigma^1(G)$ as required.

\medskip	
	II. Now we consider the general case when $\chi$ might have 0 values on some vertices of $\Gamma$. In this case we consider $\widehat{\Gamma} = \mathcal{L}_{\mathcal{F}(\chi)}$ the graph obtained from $\Gamma$ by deleting all vertices $v$ with $\chi(v) =0$ and all edges that have a vertex $v$ as above. 
	Consider the epimorphism
	$$\pi : G = G_{\Gamma} \to \widehat{G} = G_{\widehat{\Gamma}}$$ that is identity on $v \in V(\widehat{\Gamma})$ 
	and sends $v \in V(\Gamma) \setminus V(\widehat{\Gamma})$ to $1$. Let $\widehat{\chi} : \widehat{G} \to \mathbb{Z}$ be the character induced by $\chi$ and  $\mu : \widehat{G}/ \widehat{G}'' \to \mathbb{Z}$ be the character induced by $\widehat{\chi}$. Then by case I we have 
$$[ \mu ] \notin \Sigma^1(\widehat{G}/ \widehat{G}'' ) \hbox{ and } [\widehat{\chi}] \notin \Sigma^1( \widehat{G}). $$
Since $\widehat{G}/ \widehat{G}'' $ is a quotient of $G/ G''$, for the character $\overline{\chi} : G/ G '' \to \mathbb{Z}$ induced by $\chi$ we have by Lemma \ref{propsigmaepi} that 
\begin{equation} \label{eqq3} [\overline{\chi}] \notin \Sigma^1(G/ G '').\end{equation} 
Applying again  Lemma \ref{propsigmaepi} for the canonical epimorphism $G \to G / G''$ we obtain that $$[\chi] \notin \Sigma^1(G).$$

	\section{Proofs of Proposition B and Corollary C}
	
	\subsection{Proof of Proposition B}
	Let $X$ be the set of vertices of the graph $\Gamma$. Since the $\Sigma^1$-Conjecture for Artin groups holds for $G$ we have that  $[\chi] \notin \Sigma^1(G)$ if and only if  $\mathcal{L}(\chi)$ is either disconnected or not dominant in $\Gamma$. In the latter  case it means that for $X_1 = \{ x \in X ~| ~\chi(x) = 0 \}$ and $X_2 = \{ x \in X ~|  ~  \chi(x) \not= 0 \}$ there is a vertex $x \in X_1$ that is not connected by an edge with  any vertex from $X_2$. 
	 	 
	  Define
	$$
	K = \{ Y_1 ~| ~Y_1 \hbox{ is a subset of }X  \hbox{  such that there is an element of }Y_1 $$
	$$
	\hbox{  that is not linked by an edge with any element of } X \setminus Y_1 \}.
	$$ Define $$\Delta(Y_1) = \{ [\chi] \in S(G) ~| ~\chi(Y_1) = 0 \}.$$ Note that if $[\chi] \in \Delta(Y_1)$ for some $Y_1 \in K$ then $Y_1 \subseteq \{ v \in X ~| ~\chi(v) = 0 \}$ and there is an element of $Y_1$ that is not linked by an edge with any element of $\{ v \in X ~| ~\chi(v) \not= 0 \} \subseteq X \setminus Y_1$. Thus $\mathcal{L}(\chi)$ is not dominant in $\Gamma$ and hence
	$$\cup_{Y_1 \in K}  \Delta(Y_1) \subseteq \Sigma^1(G) ^c = S(G) \setminus \Sigma^1(G).$$

	Now we analyse the condition that $\mathcal{L}(\chi)$  is disconnected. 	 We define
	$$N = \{ (Y_1, e_1, \ldots, e_m) ~| ~ Y_1 \subset X, ~ e_1, \ldots, e_m \hbox{ are edges with even labels bigger than 2 in the} $$ $$\hbox{  complete subgraph } \mathcal{L}_0 \hbox{ of } \Gamma \hbox{  spanned by }X \setminus  Y_1 \hbox{ and }  \mathcal{L}_0 \setminus \{ e_1, \ldots, e_m\} \hbox{ is disconnected} \}.$$
	
	Define for $(Y_1, e_1, \ldots, e_m) \in N$
	$$\Delta_0(Y_1, e_1, \ldots, e_m) = \{ [\chi] \in S(G) ~| ~\chi(Y_1) = 0, \chi(y) \not= 0 \hbox{ for } y \in X \setminus Y_1,$$ $$ \chi ( u_i v_i) = 0, \hbox{ where } u_i, v_i \hbox{ are the vertices of } e_i, 1 \leq i \leq m \}$$ and 
	$$\Delta(Y_1, e_1, \ldots, e_m) = \{ [\chi] \in S(G) ~| ~\chi(Y_1) = 0, $$ $$ \chi ( u_i v_i) = 0,  \hbox{ where } u_i, v_i \hbox{ are the vertice of } e_i, 1 \leq i \leq m \}.$$
	Since the $\Sigma^1$-Conjecture for Artin groups holds for $G$
	$$\cup_{(Y_1, e_1, \ldots, e_m) \in N} \Delta_0(Y_1, e_1, \ldots, e_m) \subseteq \Sigma^1(G)^c.$$
	Since $\Sigma^1(G)^c$ is a closed subset of $S(G)$ we deduce that
	$$\cup_{(Y_1, e_1, \ldots, e_m) \in N} \Delta(Y_1, e_1, \ldots, e_m) \subseteq \Sigma^1(G)^c.$$
	Using again that the $\Sigma^1$-Conjecture for Artin groups holds for $G$ we have
	$$\Sigma^1(G)^c = (\cup_{(Y_1, e_1, \ldots, e_m) \in N} \Delta_0(Y_1, e_1, \ldots, e_m)) \cup (\cup_{Y_1 \in K}  \Delta(Y_1) ) \subseteq$$ $$ (\cup_{(Y_1, e_1, \ldots, e_m) \in N} \Delta(Y_1, e_1, \ldots, e_m)) \cup (\cup_{Y_1 \in K}  \Delta(Y_1) ) \subseteq \Sigma^1(G)^c.$$
	Thus 
	$$\Sigma^1(G)^c = (\cup_{(Y_1, e_1, \ldots, e_m) \in N} \Delta(Y_1, e_1, \ldots, e_m)) \cup (\cup_{Y_1 \in K}  \Delta(Y_1) ) $$
	and by construction each $\Delta(Y_1, e_1, \ldots, e_m)$ and 
	$\Delta(Y_1)$ is of the type $S(G, H)$ for appropriate subgroups $H$ of $G$.

	\subsection{Proof of Corollary C}  
	
	I. Let $[\chi] \in S(G)$ and $\overline{\chi}$ be the character of $G/ G''$ induced by $\chi$.
	By Lemma \ref{propsigmaepi} if $[\chi] \in \Sigma^1(G)$ then $[\overline{\chi}] \in  \Sigma^1(G/ G'')$. 
	
	\medskip
	II. Suppose now that $[\chi] \notin \Sigma^1(G)$. We aim to show that $[\overline{\chi}] \notin \Sigma^1(G/ G'')$. By Theorem A either $\mathcal{L}(\chi)$ is disconnected or $\mathcal{L}(\chi)$ is not dominant in $\Gamma$. We consider several cases.
	
1)	We suppose first that $\mathcal{L}(\chi)$ is not dominant in $\Gamma$. Then there is a vertex $t$ of $\Gamma$ such that $\chi(t) = 0$ and $t$ is not connected by an edge with any vertex of $\mathcal{L}(\chi)$ i.e. with any vertex $v$ in $\Gamma$ with $\chi(v) \not= 0$. Let $S$ be the graph obtained from $\Gamma$ by deleting all vertices $w$ of $\Gamma$ and the edges attached to them such that $w \not= t$ and $\chi(w) = 0$. Thus the set of vertices of $S$ is $V(\mathcal{L}(\chi)) \cup \{ t \}$. We write $V_0$ for $V(\mathcal{L}(\chi))$. Furthermore in $S$ all vertices from $V_0$ are linked by edges labelled by 2. We define $Q_0$ as the free abelian group with a basis $V_0$,  and consider the epimorphism  of groups $$\pi : G = G_{\Gamma} \to G_S = Q_0 * \langle t \rangle$$
	that is the identity on the vertices of $S$ and sends all vertices  $w$ to $1$, where $w \not= t$ and $\chi(w) = 0$. Here $\langle t \rangle$ is infinite cyclic. We set $$Q = G_S / G_S' \simeq Q_0 \oplus \mathbb{Z}.$$ Since $\Ker (\pi) \subseteq \Ker(\chi)$ we deduce that $\chi$ induces a character of $G_S$ that we denote by $\chi_S$. Finally it remains to show that $[\chi_S] \in \Sigma^1(G_S/ G_S'')^c$ and applying again Lemma \ref{propsigmaepi} for the epimorphism $G / G '' \to G_S/ G_S''$ induced by $\pi$ we can deduce that $[\chi] \in \Sigma^1(G / G '')^c$. 
	
	We will show that $\Sigma^1(G_S/ G_S'')^c = S(G_S/ G_S'')$.
		By \cite{B-S2} 
$[\mu ]\in \Sigma^1(G_S/ G_S'')$ if and only if for every prime ideal $P$ of $\mathbb{Z} Q$ minimal subject to $I \subseteq P$ we have that $\mathbb{Z} Q / P$ is finitely generated as $\mathbb{Z} Q_{\mu_0}$-module, where $\mu_0$ is the character of $G_S/ G_S'$ induces by $\mu$, $I = ann_{\mathbb{Z} Q} (G_S'/ G_S'')$ and we view $G_S'/ G_S''$ as $\mathbb{Z} Q$-module with $Q$ acting on the left by conjugation. Thus if we show that $I = 0$ we are done, since in this case $P = 0$ and $\mathbb{Z} Q / P \simeq \mathbb{Z} Q$ is not finitely generated as $\mathbb{Z} Q_{\mu_0}$-module for any non-zero real character $\mu_0$.

Note that $G_S'/ G_S''$ is generated as $\mathbb{Z} Q$-module by $w(q_i) : = t^{ -1} q_i^{ -1} t  q_i $ for $ 1 \leq i \leq m$, where $V_0 = \{ q_1, \ldots, q_m \}$ and has relations that come from the Hall-Witt identity (where all commutators are left normed)
$$1 = [q_i, t^{ - 1}, q_j]^t . [t, q_j^{ - 1}, q_i]^{ q_j} . [q_j, q_i^{ - 1}, t]^{ q_i} =[q_i, t^{ - 1}, q_j]^t . [t, q_j^{ - 1}, q_i]^{ q_j} $$ 
Since $[q_i, t^{ - 1}] = q_i^{-1} t q_i t^{-1} =  t w(q_i) t^{  -1} $   we have 
$$[q_i, t^{ - 1}, q_j]^t = ( t w(q_i)^{ -1}  t^{  -1}  q_j^{ -1} t w(q_i) t^{  -1} q_j)^t = w(q_i)^{ -1}  t^{  -1}  q_j^{ -1} t w(q_i) t^{  -1} q_j t=$$
$$w(q_i)^{ -1}  w(q_j) q_j^{ -1} w(q_i) q_j w(q_j)^{ -1}= w(q_i)^{ -1}  w(q_j) (w(q_i)^{  q_j}) w(q_j)^{ -1}.$$
Since  $[t, q_j^{  -1}] = t^{ -1} q_j t q_j^{ - 1} = q_j w(q_j) ^{ -1 } q_j^{ -1} $ we have 
$$[t, q_j^{ - 1}, q_i]^{ q_j}  =  ( q_j w(q_j) q_j^{ -1 } q_i^{-1} q_j w(q_j) ^{ -1 } q_j^{ -1} q_i)^{ q_j} =$$ $$ (w(q_j)^{  q_j^{ -1} }  (w(q_j)^{  q_j^{ -1 }q_i})^{ -1 })^{ q_j} = w(q_j) (w(q_j)^{ q_j^{ -1 } q_i q_j}  )^{ -1 }.$$
Then in $G_S/ G_S''$ since $[w(q_k), w(q_s)^{  q_p} ] = 1$ and $[q_k, q_s] = 1$ for $1 \leq k,s,p \leq m$   we have
$$1 = [q_i, t^{ - 1}, q_j]^t . [t, q_j^{ - 1}, q_i]^{ q_j}  = w(q_i)^{ -1}  w(q_j) (w(q_i)^{  q_j}) w(q_j)^{ -1}  w(q_j) (w(q_j)^{ q_j^{ -1 } q_i q_j}  )^{ -1 }=$$ \begin{equation} \label{hall} w(q_i)^{ q_j} .w(q_i)^{-1} w(q_j) (w(q_j)^{ q_i})^{ -1 }
= (^{ q_j^{ -1}} w(q_i) ).w(q_i)^{-1}. w(q_j) .(^{ q_i^{-1}} w(q_j))^{ -1 }.
\end{equation} 
Recall that we view $G_S'/ G_S''$ as a left $\mathbb{Z} Q$-module via conjugation and it is generated as $\mathbb{Z} Q$-module by $w(q_1), \ldots, w(q_m)$ subject only to the relations (\ref{hall}) since $G_S$ is the free product of $Q_0$ and $\langle t \rangle$. Then  (\ref{hall}) written  additively as an element of $\sum_i \mathbb{Z} Q w(q_i)$ becomes
$$0 = (q_j^{-1} -1) w(q_i) -  (q_i^{-1} -1) w(q_j).$$
Then in the localisation $\mathbb{Z} Q[ \frac{1}{q_1-1}, \ldots, \frac{1}{q_m-1}] \otimes_{\mathbb{Z} Q} (G_S'/ G_S'') $  we have (after omiting $\otimes$) $$w(q_i) = \frac{1}{q_j^{-1} -1 } (q_i^{-1} -1) w(q_j),$$ hence $\mathbb{Z} Q[ \frac{1}{q_1-1}, \ldots, \frac{1}{q_m-1}] \otimes_{\mathbb{Z} Q} (G_S'/ G_S'')$ is a cyclic $\mathbb{Z} Q[ \frac{1}{q_1-1}, \ldots, \frac{1}{q_m-1}] $-module and
$$\mathbb{Z} Q[ \frac{1}{q_1-1}, \ldots, \frac{1}{q_m-1}] \otimes_{\mathbb{Z} Q} (G_S'/ G_S'') \simeq \mathbb{Z} Q[ \frac{1}{q_1-1}, \ldots, \frac{1}{q_m-1}].$$
Finally since the ring homomorphism $\mathbb{Z} Q \to \mathbb{Z} Q[ \frac{1}{q_1-1}, \ldots, \frac{1}{q_m-1}]$ induced by the identity on $Q$ and $\mathbb{Z}$ is injective we deduce that 
 $I = 0$.
	
	2) Suppose now that  $\mathcal{L}(\chi)$ is disconnected and $\chi$ is a discrete character.
	Then in (\ref{eqq3}) from the proof of Theorem A we showed  that $[\overline{\chi}] \notin \Sigma^1(G/ G'')$.
	
	 3) Finally suppose that $\mathcal{L}(\chi)$ is disconnected without any restrictions on $\chi$.  By Theorem A and Proposition  B    $\Sigma^1(G)^c$  is a rationally defined polyhedron. In particular since $[\chi] \in \Sigma^1(G)^c$ there is a sequence of  discrete characters $\chi_i$ of $G$ such that $[\chi]$ is the limit of the sequence $[\chi_i]$ in $S(G)$ and $[\chi_i] \in \Sigma^1(G)^c$.   Then since the $\Sigma^1$- Conjecture for Artin groups holds for $G$ we have that either $\mathcal{L}(\chi_i)$ is disconnected  or $\mathcal{L}(\chi_i)$ is not dominant in $\Gamma$. Let $\overline{\chi}_i$ be the character of $G/ G''$ induced by $\chi_i$. Then by part 1) and 2) $[\overline{\chi}_i] \in \Sigma^1(G/ G'')^c$. Since $\Sigma^1(G/ G'')^c$ is a closed subset of $S(G/ G'')$ and  $[\overline{\chi}] $  is the limit of the sequence $ [\overline{\chi}_i] $ in $S(G/ G'')$ we deduce that $[\overline{\chi}] \in \Sigma^1(G/ G'')^c$.

	\section{Some examples} \label{final}
	
	Finally we remark why the method used in this paper generally does no apply for arbitrary Artin groups or even Artin groups that do not satisfy the assumptions of Theorem A. It boils down to the fact that in general the maximal metabelian quotient $G/ G ''$ does not contain sufficient information to calculate $\Sigma^1(G)$.
	
	\medskip
	
	1) Let $\Gamma$ be a graph with vertices $V \cup W$, where $V = \{ v, s \}$, $W = \{ u, w \}$, all vertices are linked by an edge  and $ 2m_{xy}$ denotes the label of the edge that links $x$ and $y$. We set
	$$m_{v,s} = m_{u,w} = 1, ~ m_{u,v} = m_{v,w} = m_{w,s} = 2, ~ m_{s,u} = 3.$$	
	
	Consider the character $\chi : G = G_{\Gamma} \to \mathbb{R} $ that sends $V$ to 1 and $W$ to -1.  
 Let $\overline{\chi}$ be the character of $G/ G ''$ induced by $\chi$ and $\chi_0$ be the character of $G/ G'$ induced by $\chi$.
	We will show  that $\Ker(\overline{\chi})$  is finitely generated though the $\Sigma^1$-Conjecture for Artin groups predicts that $[{\chi}], [-{\chi}] \notin \Sigma^1(G)$, hence if this prediction holds $\Ker(\chi)$ is not finitely generated but we do not know whether this is the case. Justifying this by concrete calculation is surprisingly hard.

 To calculate $G/ G''$ we use
the previous calculations from the proof of Theorem A  and we deduce that $G'/ G''$  is generated as $\mathbb{Z} Q$-module by  $e_v \wedge e_u, e_v \wedge e_w, e_s \wedge e_u, e_s \wedge e_w$ subject to the relations
$$ (v-1)e_s \wedge e_u  -(s-1) e_v \wedge e_u   = 0 = (v-1)e_s \wedge e_w  -(s-1) e_v \wedge e_w,$$  
$$(w-1)e_s \wedge e_u  -(u-1) e_s \wedge e_w   = 0  =(w-1)e_v \wedge e_u  -(u-1) e_v \wedge e_w $$ and $$
( 1 + uv) e_v \wedge e_u = 0 =( 1 + vw) e_v \wedge e_w, ( 1 + us + (us)^2) e_s \wedge e_u = 0 = ( 1 + sw)e_s \wedge e_w.$$
By \cite{B-S2} the $\Sigma^1(G/ G'')$ depends on the minimal prime ideals $P$ above $I = ann_{\mathbb{Z} Q} (A)$, where $A = G'/ G''$ and $Q = G/ G'$.   More precisely 
$[\overline{\chi} ]\in \Sigma^1(G/ G'')$ if and only if for every prime ideal $P$ of $\mathbb{Z} Q$ minimal subject to $I \subseteq P$ we have that $\mathbb{Z} Q / P$ is finitely generated as $\mathbb{Z} Q_{\chi_0}$-module.

If $I_1 = ann_{\mathbb{Z} Q} (e_v \wedge e_u), I_2 = ann_{\mathbb{Z} Q} (e_v \wedge e_w), I_3 = ann_{\mathbb{Z} Q} (e_s \wedge e_u), I_4= ann_{\mathbb{Z} Q} (e_s \wedge e_w) $ then
$$I_1 I_2 I_3 I_4 \subseteq I_1 \cap I_2 \cap I_3 \cap I_4 = I \subseteq P$$
hence for some $i$ we have $I_i \subseteq P$. Here we consider the case $i = 1$. The other cases are similar. 
Note that
$$ 1 + uv, ( 1 + su + (su)^2)(s-1), (1 + vw)(w-1),  ( 1 + sw)(s-1) (w - 1) \in I_1 \subseteq P.$$
If $s-1, w- 1 \notin P$, since $P$ is a prime ideal we get
$$ 1 + uv, 1 + su + (su)^2, 1 + vw,  1 + sw \in P.$$
But the ideal in $\mathbb{Z} Q$-generated by 	$1 + uv, 1 + su + (su)^2, 1 + vw,  1 + sw$ is the whole ring $\mathbb{Z} Q$, a contradiction.
Thus we can assume that $s-1$ or $w - 1$ is an element of $P$. Since $\chi(s) \not= 0, \chi(w) \not= 0$ we deduce that $\mathbb{Z} Q / P$ is finitely generated as $\mathbb{Z} Q_{\chi_0}$-module and as $\mathbb{Z} Q_{ - \chi_0}$-module. This holds for any proper prime ideal $P$ above $I_1$ and similarly for any proper prime ideal $P$ above $I_i$, hence for any proper prime ideal $P$ above $I$.  Then $[\overline{\chi} ], [- \overline{\chi}] \in \Sigma^1(G/ G'')$, so $\Ker (\overline{\chi})$ is finitely generated.

2) Consider  the graph $\Gamma$  that is triangle with labels 3,4 and 6. By \cite{A-K1} the $\Sigma^1$-Conjecture for Artin groups holds for $G = G_{\Gamma}$, hence $\Sigma^1(G) ^c \not= \emptyset$, hence $G'$ is not finitely generated. We can do calculations with Fox derivatives similar to the ones from Section \ref{Fox-section} and show that in this case $G'/ G''$ is a finitely generated abelian group  though $G'$ is not finitely generated.

\end{document}